\newcommand{\kk}{\Bbbk}
\numberwithin{equation}{section}
\newtheorem{thm}{Theorem}[section]
\newtheorem{lem}[thm]{Lemma}
\newtheorem{prop}[thm]{Proposition}
\theoremstyle{definition}
\newtheorem{defn}[thm]{Definition}
\newtheorem{ex}[thm]{Example}
\newtheorem{rem}[thm]{Remark}
\newtheorem{quest}[thm]{Question}
\newcommand{\abs}[1]{\left| #1\right|}
\begin{document}
\date{}

\title{The Weak Lefschetz Property for Tensor Products of Artinian Monomial Algebras and Its Applications to Lollipop Graphs}

\author[T.Q. Hoa]{Tran Quang Hoa}
\address{University of Education, Hue University,  34 Le Loi St., Hue City, Vietnam.}
\email{tranquanghoa@hueuni.edu.vn}

\author[N.D. Phuoc]{Nguyen Duy Phuoc}
\address{University of Education, Hue University, 34 Le Loi St., Hue City, Viet Nam}
\email{ndphuoc@dhsphue.edu.vn}

\author[T.N.T. Son]{Tran Nguyen Thanh Son}
\address{University of Education, Hue University, 34 Le Loi St., Hue City, Viet Nam}

\email{tntson@dhsphue.edu.vn}

\begin{abstract}
In this paper, we investigate the weak Lefschetz property for tensor products of Artinian monomial algebras and complete quadratic monomial algebras. As an application, we classify the weak Lefschetz property of the Artinian algebras $A(L_{m,n})$, which are defined by the edge ideals of the lollipop graphs $L_{m,n}$ together with the squares of the variables.
\end{abstract}

\makeatletter
\@namedef{subjclassname@2020}{%
	\textup{2020} Mathematics Subject Classification}
\makeatother

\subjclass[2020]{13E10, 13F20, 13F55, 05C31}
\keywords{Artinian algebras, complete quadratic monomial algebras, edge ideals, independence polynomials, lollipop graphs, weak Lefschetz property}

\maketitle
\section{Introduction}
A graded Artinian algebra $A=\bigoplus_{i=0}^D [A]_i$ over a field $\kk$ is said to have the \emph{weak Lefschetz property} (WLP for short) if there exists a linear form $\ell \in [A]_1$ such that each multiplication map 
\[
\cdot \ell:[A]_i \longrightarrow [A]_{i+1}
\]
has maximal rank for all $i$. Similarly, $A$ has the \emph{strong Lefschetz property} (SLP for short) if there exists a linear form $\ell$ such that each multiplication map
\[
\cdot \ell^j:[A]_i \longrightarrow [A]_{i+j}
\]
has maximal rank for all $i$ and all $j$.  
The Lefschetz properties have been an active topic of research in commutative algebra in recent years, owing to their connections with several other areas of mathematics, such as the Dilworth number in poset theory and Schur–Weyl duality; see \cite{HMMNWW2013} and \cite{MN2013} for an overview.

For Artinian $\kk$-algebras defined by monomial ideals, the literature concerning their Lefschetz properties is quite extensive (see, for instance, \cite{AB2020,AN2018,DaoNair2022,GLN2020,Holleben2024,MM2016,MMO2013,MMN2011,MNS2020,HT2023} and the references therein).  
In this work, we focus on a special class of Artinian algebras defined by quadratic monomials, described as follows.  
Let $G=(V,E)$ be a simple graph on the vertex set $V=\{1,2,\ldots,n\}$ and $R=\kk[x_1,\ldots,x_n]$ be the standard graded polynomial ring over $\kk$.  
The \emph{edge ideal} of $G$ is given by
\[
I(G)=(x_i x_j \mid \{i,j\}\in E) \subset R.
\]
We then define
\[
A(G)=\frac{R}{(x_1^2,\ldots,x_n^2)+I(G)},
\]
which we call the \emph{Artinian algebra associated to $G$}.  
We are interested in the following question.

\begin{quest}\label{quest_WLPofAG}
	For which graphs $G$, does $A(G)$ have the WLP or the SLP? If $A(G)$ does not have the WLP or the SLP, in which degrees do the multiplication maps fail to have maximal rank?
\end{quest}

The algebra $A(G)$ has been studied in \cite{NT2024,QHT2020}, where the WLP and SLP were classified for several special classes of graphs, including complete graphs, star graphs, Barbell graphs, wheel graphs, paths, and cycles.  
Artinian algebras defined by quadratic monomials were also considered in earlier works by Michałek–Miró-Roig \cite{MM2016}, Migliore–Nagel–Schenck \cite{MNS2020}, Dao–Nair \cite{DaoNair2022}, Jonsson Kling \cite{Kling2024}, Holleben \cite{Holleben2024,Holleben2025}, and Cooper et al. \cite{Cooperetal2023}.

Our main objective in this paper is to give a complete classification of the lollipop graphs $L_{m,n}$ for which the associated Artinian algebras have (or fail to have) the WLP.  
As a first step, we consider a broader framework—the tensor product of an Artinian monomial algebra with a complete quadratic monomial algebra.  
This leads to our first main result, stated below.

\begin{thm}[Theorem \ref{generalization of lollipop}] \label{Theorem1.2}
	Suppose that $\operatorname{char}(\Bbbk)=0$. Let 
	\[
	B:= \frac{\Bbbk[x_1,x_2,\ldots,x_n]}{(x_1,x_2,\ldots,x_n)^2} \otimes_\Bbbk A,
	\]
	where $A=\dfrac{\Bbbk[y_1,y_2,\ldots,y_m]}{J}$ is an Artinian monomial algebra over $\Bbbk$ of socle degree $D>0$.  
	Set $\ell=y_1+y_2+\cdots+y_m$ and $\ell'= x_1+x_2+\cdots + x_n+\ell$. Then, for all $i\in \{1,2,\ldots,D-1\}$, the multiplication map 
	\[
	\cdot \ell':[B]_i \longrightarrow [B]_{i+1}
	\]
	is injective (respectively, surjective) if and only if the multiplication maps
	\[
	\cdot \ell:[A]_{i-1}\longrightarrow[A]_i \quad \text{and}\quad \cdot \ell^2:[A]_{i-1}\longrightarrow[A]_{i+1}
	\]
	are both injective (respectively, surjective).  
	In particular, the multiplication map $\cdot \ell':[B]_0\longrightarrow[B]_1$ is always injective, while the map $\cdot\ell':[B]_D\longrightarrow[B]_{D+1}$ has maximal rank if and only if the map $\cdot\ell:[A]_{D-1}\longrightarrow[A]_D$ is surjective.
\end{thm}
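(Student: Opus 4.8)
The plan is to exploit the very simple structure of the first tensor factor $C:=\Bbbk[x_1,\dots,x_n]/(x_1,\dots,x_n)^2$, whose only nonzero graded pieces are $[C]_0=\Bbbk$ and $[C]_1=\langle x_1,\dots,x_n\rangle$. Since $B=C\otimes_\Bbbk A$, for every $i$ this yields the vector-space decomposition
\[
[B]_i=[A]_i\ \oplus\ \bigoplus_{k=1}^{n} x_k\,[A]_{i-1},
\]
and I would identify each summand $x_k[A]_{i-1}$ with $[A]_{i-1}$, writing a general element as $(f;g_1,\dots,g_n)$ with $f\in[A]_i$ and $g_k\in[A]_{i-1}$. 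Setting $s:=x_1+\cdots+x_n$, so that $\ell'=s+\ell$, and using that $x_jx_k=0$ in $C$ for all $j,k$, multiplication by $\ell'$ collapses to
\[
\ell'\cdot(f;g_1,\dots,g_n)=\bigl(\ell f;\ f+\ell g_1,\dots,f+\ell g_n\bigr),
\]
where $\ell f\in[A]_{i+1}$ and each $f+\ell g_k\in[A]_i$. Thus $\cdot\ell'$ is the block map whose top-left entry is $\cdot\ell:[A]_i\to[A]_{i+1}$, whose $n$ lower diagonal blocks are $\cdot\ell:[A]_{i-1}\to[A]_i$, and whose first block column additionally carries $n$ copies of the identity of $[A]_i$. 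Everything then reduces to linear algebra over this explicit form.

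For injectivity I would analyze the kernel directly. If $(f;g_1,\dots,g_n)\mapsto 0$ then $\ell f=0$ and $f=-\ell g_k$ for every $k$; substituting gives $\ell^2g_1=0$, so if $\cdot\ell^2:[A]_{i-1}\to[A]_{i+1}$ is injective then $g_1=0$, whence $f=0$, and then $\ell g_k=0$ forces $g_k=0$ once $\cdot\ell:[A]_{i-1}\to[A]_i$ is injective. Conversely, a nonzero $g\in\ker(\cdot\ell:[A]_{i-1}\to[A]_i)$ produces the nonzero kernel element $(0;g,0,\dots,0)$, while a nonzero $g\in\ker(\cdot\ell^2)$ produces the nonzero kernel element $(-\ell g;g,\dots,g)$; hence injectivity of $\cdot\ell'$ forces injectivity of both maps on $A$.

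For surjectivity, a target $(h;p_1,\dots,p_n)$ is hit precisely when one can solve $\ell f=h$ and $\ell g_k=p_k-f$. If $\cdot\ell^2$ is surjective I choose $g$ with $\ell^2g=h$ and put $f=\ell g$; if moreover $\cdot\ell:[A]_{i-1}\to[A]_i$ is surjective then each $p_k-f\in\ell[A]_{i-1}$ and the $g_k$ exist, so the two hypotheses suffice. The converse is the step I expect to be the main obstacle, since the blocks interact and one must choose probing targets to disentangle them: the target $(h;0,\dots,0)$ forces $f=-\ell g_1$ and hence $h=\ell f=-\ell^2g_1$, giving surjectivity of $\cdot\ell^2$; and the target $(0;p,0,\dots,0)$ forces $f=-\ell g_2$ and hence $p=\ell(g_1-g_2)\in\ell[A]_{i-1}$, giving surjectivity of $\cdot\ell$. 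This last deduction plays two lower blocks against each other, which is exactly the point at which having $n\ge 2$ vertices in the complete-graph factor is used.

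Finally I would dispatch the two boundary degrees. At $i=0$ we have $[B]_0=\Bbbk$ and $\ell'\cdot 1=\ell+s$, whose components in the summands $x_k[A]_0$ all equal $1$; hence $\cdot\ell'$ is injective irrespective of $A$. At $i=D$ the space $[A]_{D+1}$ vanishes, so the top block is zero and the map becomes $(f;g_1,\dots,g_n)\mapsto(f+\ell g_1,\dots,f+\ell g_n)$; the element $(-\ell g;g,\dots,g)$ shows it is never injective once $[A]_{D-1}\ne 0$, which holds because $A$ is standard graded of socle degree $D$, so maximal rank can only mean surjectivity. As the $\cdot\ell^2$ condition is now vacuous ($[A]_{D+1}=0$), the surjectivity analysis collapses to the single requirement that $\cdot\ell:[A]_{D-1}\to[A]_D$ be surjective, which is the stated criterion. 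I do not expect the characteristic hypothesis to enter this reduction; it presumably matters only when the Lefschetz maps of $A$ itself are analyzed.
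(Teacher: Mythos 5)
Your proposal is correct, and at bottom it rests on the same decomposition the paper uses: your formula $\ell'\cdot(f;g_1,\dots,g_n)=(\ell f;\,f+\ell g_1,\dots,f+\ell g_n)$ is exactly the block matrix of the paper's Lemma~\ref{block matrix} read element-wise. The difference is in execution: the paper performs block row and column operations to bring that matrix to an anti-diagonal form with blocks $I_{h_i}$, $n-1$ copies of $\mathcal{M}^{i-1}_i$, and $\mathcal{M}^{i}_{i+1}\mathcal{M}^{i-1}_i$, then reads off the rank, whereas you analyze kernels and solvability directly via probing vectors and targets. Your route buys something the paper's rank bookkeeping obscures: it makes explicit that the ``only if'' direction for surjectivity genuinely requires $n\ge 2$ (you must play two lower blocks against each other), and this is not a defect of your argument but a real gap in the paper's statement, which allows $n=1$. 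Indeed, for $n=1$ the surjectivity criterion is false: take $A=\Bbbk[y_1,y_2]/(y_1^3,y_2^2,y_1y_2)$, so $D=2$, and $i=1$; then $\cdot\ell':[B]_1\to[B]_2$ is bijective (its matrix in the bases $\{y_1,y_2,x_1\}$ and $\{y_1^2,x_1y_1,x_1y_2\}$ has determinant $\pm 1$), yet $\cdot\ell:[A]_0\to[A]_1$ is not surjective; likewise, for $n=1$ the map $[B]_D\to[B]_{D+1}$ is always surjective because of the identity block, regardless of whether $\cdot\ell:[A]_{D-1}\to[A]_D$ is. (Injectivity is unaffected by this edge case, since injectivity of $\cdot\ell^2$ on $[A]_{i-1}$ already forces injectivity of $\cdot\ell$ there; and in all of the paper's applications the complete quadratic factor has at least two variables, so nothing downstream breaks.) Your treatment of the boundary degrees $i=0$ and $i=D$ matches the paper's conclusions under the $n\ge 2$ proviso, and you are also right that the characteristic-zero hypothesis plays no role in the reduction itself.
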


This theorem also provides a partial answer to a question posed by Jonsson Kling \cite{K2025}, which concerns the Artinian $\Bbbk$-algebra 
\[
\frac{\Bbbk[x_1,x_2,\ldots,x_n]}{(x_1,x_2,\ldots,x_n)^d}\otimes_\Bbbk A.
\]

As a consequence, we completely determine all Artinian $\Bbbk$-algebras $A(L_{m,n})$ associated to lollipop graphs $L_{m,n}$ that fail to have the WLP, by applying Theorem~\ref{Theorem1.2} together with suitable short exact sequences.  
For the remaining cases, we show that $A(L_{m,n})$ has the WLP by using the snake lemma in combination with inductive arguments.  
Our second main result is stated below.

\begin{thm} [Theorem \ref{WLP for lollipop}]\label{SecondTheorem}
	Suppose that $\operatorname{char}(\Bbbk)=0$. Then $A(L_{m,n})$ has the WLP if and only if one of the following holds:
	\begin{enumerate}[\quad \rm (i)]
		\item $m=1$ and $n\in \{1,2,\dots,6,8,9,12\}$;
		\item $m=2$ and $n\in \{1,2,\dots,5,7,8,11\}$;
		\item $m\geq 3$ and $n\in \{1,3,4,7\}$.
	\end{enumerate}
\end{thm}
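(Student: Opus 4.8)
The plan is to route everything through the tensor-product algebra of Theorem~\ref{Theorem1.2} and to split the argument by the clique size $m$. I model $L_{m,n}$ as the clique $K_m$ on $x_1,\dots,x_m$ carrying a pendant path $P_n$ on $y_1,\dots,y_n$, joined by the single bridge edge $x_1y_1$. For $m=1,2$ the clique is too small to create a cycle, so the lollipop straightens into an honest path, $L_{1,n}\cong P_{n+1}$ and $L_{2,n}\cong P_{n+2}$, and cases (i)--(ii) follow immediately from the known WLP-classification of path algebras after substituting $k=n+m$. For $m\ge 3$ I present $A(L_{m,n})$ as the bridge quotient of $B:=A(K_m)\otimes_{\kk}A(P_n)$: in $B$ the monomial $x_1y_1$ is annihilated by every clique variable and by $y_1,y_2$, so $(x_1y_1)$ is a shifted shorter stick and
\[
0\longrightarrow A(P_{n-2})[-2]\xrightarrow{\ \cdot x_1y_1\ }B\longrightarrow A(L_{m,n})\longrightarrow 0 .
\]
With $\ell'=x_1+\dots+x_m+y_1+\dots+y_n$ this is exactly the setting of Theorem~\ref{Theorem1.2}, the stick playing the role of $A$.

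For the failure half I would read Theorem~\ref{Theorem1.2} off $B$ and transport the defect by the snake lemma. Put $h_i:=\dim_{\kk}[A(P_n)]_i=\binom{n-i+1}{i}$. Maximal rank of $\cdot\ell'$ on $[B]_i$ demands one coherent regime: either $\cdot\ell,\cdot\ell^2$ both injective (needing $h_{i-1}\le h_i$ and $h_{i-1}\le h_{i+1}$) or both surjective (the reverse inequalities), together with boundary surjectivity $h_{D-1}\ge h_D$. Equivalently, the map on $[B]_i$ can have maximal rank only if $h_{i-1}$ is \emph{not} strictly between $h_i$ and $h_{i+1}$. For the unimodal sequence $\binom{n-i+1}{i}$ this strict interior peak $h_{i+1}<h_{i-1}<h_i$ occurs for an explicit infinite set of $n$ (the first instances being $n=5,6,9,10,12,13$), and the boundary inequality fails at $n=2$; each such $n$ is eliminated for every $m$. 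The complementary values, those with no strict interior peak, again form an infinite set, $\{1,3,4,7,8,11,14,\dots\}$ after discarding $n=2$; here the dimensions permit maximal rank, so every member beyond the genuine WLP-values must instead be killed by an actual char-$0$ rank drop of $\cdot\ell^2$ on $A(P_n)$. In each excluded case the snake lemma applied to the bridge sequence carries the kernel/cokernel defect of $\cdot\ell'$ on $B$ onto $A(L_{m,n})$, since $A(P_{n-2})[-2]$ occupies only two consecutive degrees and, as I would verify, cannot cancel the defect in the critical degree.

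For the surviving list $n\in\{1,3,4,7\}$ I would prove the WLP by induction on the clique size $m$, via the vertex-deletion sequence obtained by erasing a non-bridge clique vertex $x_m$. Since the closed neighbourhood $N[x_m]$ is the entire clique, this reads
\[
0\longrightarrow A(P_n)[-1]\xrightarrow{\ \cdot x_m\ }A(L_{m,n})\longrightarrow A(L_{m-1,n})\longrightarrow 0 ,
\]
with kernel the bare stick and cokernel the next smaller lollipop; the base case $m=2$ is the path $P_{n+2}$ already settled. For each of the four admissible $n$ the stick $P_n$ itself has the WLP, so the only thing to check is that, once the inductive maximal-rank data for $A(L_{m-1,n})$ and for the stick are fed into the snake lemma, the two connecting homomorphisms vanish; this vanishing propagates injectivity in the low degrees and surjectivity in the high degrees to $A(L_{m,n})$. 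The vanishing is precisely the positive face of the $\cdot\ell^2$-condition above, and it is what separates $\{1,3,4,7\}$ from the nearby failures: for $n=5,6$ the stick still has the WLP yet the connecting map is nonzero already at the step $m=2\to 3$, while for $n=8$ the stick $A(P_8)$ does not have the WLP, so the induction cannot even start.

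The main obstacle is the uniform char-$0$ rank analysis of $\cdot\ell$ and $\cdot\ell^2$ on the stick $A(P_n)$ over the dimensionally-admissible family $\{1,3,4,7,8,11,14,\dots\}$: the dimension inequalities hold throughout, so one must show that maximal rank genuinely fails for every such $n$ beyond $7$ while persisting for $n\in\{1,3,4,7\}$. I would attack this by indexing the basis of $[A(P_n)]_i$ by the size-$i$ independent sets of $P_n$ and converting the deletion recursion $\binom{n-i+1}{i}=\binom{n-i}{i}+\binom{n-i-1}{i-1}$ (remove the last path vertex) into a recursion for the matrices of $\cdot\ell$ and $\cdot\ell^2$; isolating the critical middle degree, the rank defect ought to satisfy a linear recurrence in $n$ whose strict positivity for admissible $n\ge 8$ is what forces the cutoff. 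A secondary difficulty is to make the two connecting homomorphisms in the induction vanish uniformly in $m$, so that the $m\ge 3$ answer is genuinely $m$-independent; I expect this to reduce to the single compatibility that $\cdot x_m$ commutes with $\cdot\ell'$ modulo the already-understood action on the stick.
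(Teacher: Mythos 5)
Your skeleton (reduce $m\le 2$ to paths, combine Theorem~\ref{generalization of lollipop} with exact sequences and the snake lemma for $m\ge 3$, induct on clique size for the positive cases) matches the paper, and your bridge sequence $0\to A(P_{n-2})(-2)\to B\to A(L_{m,n})\to 0$ is exactly what the paper uses for the injectivity-failure cases $n\in\{2,5,6,9,10,12,13,16\}$. The genuine gap is in the surjectivity-failure cases $n\in\{8,11,14,15\}$ and $n\ge 17$: you want to push the cokernel defect of $\cdot\ell'$ on $B$ through the bridge sequence onto $A(L_{m,n})$, justified by the claim that $A(P_{n-2})(-2)$ ``occupies only two consecutive degrees and cannot cancel the defect.'' That claim is false: $A(P_{n-2})$ is Artinian of socle degree about $(n-2)/2$, and in the critical degrees the snake-lemma segment $\ker r\to\operatorname{coker}l\to\operatorname{coker}m\to\operatorname{coker}r\to 0$ has $\operatorname{coker}l\neq 0$. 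For instance, for $n=8$ the kernel's map in the critical spot is $[A(P_6)]_1\to[A(P_6)]_2$, of dimensions $6\to 10$, so its cokernel has dimension at least $4$ and can in principle absorb the entire defect of $\operatorname{coker}m$; dimension counting does not rule this out. The paper avoids this problem by using the \emph{other} exact sequence in these cases: deleting a clique vertex $x_m$ exhibits $\frac{\Bbbk[x_1,\ldots,x_{m-1}]}{(x_1,\ldots,x_{m-1})^2}\otimes_\Bbbk A(P_n)$ as a \emph{quotient} of $A(L_{m,n})$, and a surjectivity failure on a quotient lifts trivially to the algebra mapping onto it --- no snake lemma and no cancellation issue --- after which Theorem~\ref{generalization of lollipop} and the known surjectivity failure of $\cdot\ell$ on $A(P_n)$ at degree $\lambda_n$ (Theorem~\ref{WLP for P_n}) finish the argument.

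Two further shortfalls. First, you leave the key characteristic-zero rank facts about $\cdot\ell$ and $\cdot\ell^2$ on $A(P_n)$ as a program (a conjectural linear recurrence for rank defects); none of this is needed, since the paper only invokes the already-established path classification (Theorem~\ref{WLP for P_n}) for $\cdot\ell$, and a pure dimension count $\dim_\Bbbk[A(P_n)]_{\lambda_n-1}\ge\dim_\Bbbk[A(P_n)]_{\lambda_n+1}$ for the non-injectivity of $\cdot\ell^2$. Second, a failure of surjectivity (or injectivity) of one multiplication map only contradicts the WLP if the Hilbert function is non-increasing (respectively increasing) at that spot, so you need to locate the mode $\eta_{m,n}$ of $I(L_{m,n};t)$; the paper devotes Section~4 to proving $\eta_{m,n}\in\{\lambda_n,\lambda_n+1\}$ (via claw-freeness, hence unimodality, and induction on $m$), plus a separate lemma for the subcase $\eta_{m,n}=\lambda_n$, and your proposal omits this entirely. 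Your positive induction for $n\in\{1,3,4,7\}$ is essentially the paper's (same kernel $A(P_n)(-1)$, snake lemma; your base $m=2$ in place of the paper's tadpole citation is fine), but the condition to verify is not that connecting homomorphisms vanish --- it is that both outer vertical maps are injective below the mode and surjective from the mode on, which again requires the mode alignment $\eta_{m,n}=\lambda_n+1$ from Section~4.
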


The paper is organized as follows.  
Section~2 reviews the necessary terminology and background results concerning Artinian algebras, the weak Lefschetz property, and basic graph theory.  
Section~3 is devoted to studying the WLP for tensor products of Artinian monomial algebras with complete quadratic monomial algebras (see Theorem~\ref{generalization of lollipop}).  
In Section~4, we investigate the unimodality of the independence polynomials of lollipop graphs and determine their modes, which play a key role in analyzing the WLP of the corresponding Artinian algebras.  
Finally, Section~5 is devoted to the proof of \Cref{SecondTheorem}.

\section{Preliminaries}
This section reviews standard terminology and notation from commutative and combinatorial commutative algebra, along with several auxiliary results that will be used in the subsequent sections.

\subsection{The weak Lefschetz property}

Let $R=\kk[x_1,\ldots,x_n]$ be the standard graded polynomial ring over a field $\kk$, where each $x_i$ has degree $1$, and let $I \subset R$ be a homogeneous ideal such that $A=R/I$ is Artinian. Then $A$ is a graded Artinian algebra, which can be decomposed as
\[
A=\bigoplus_{i=0}^{D}[A]_i.
\]

\begin{defn}
	We say that $A$ has the \emph{weak Lefschetz property} (WLP) if there exists a linear form $\ell \in [A]_1$ such that, for every integer $j$, the multiplication map
	\[
	\cdot \ell: [A]_j \longrightarrow [A]_{j+1}
	\]
	has maximal rank; that is, it is either injective or surjective. In this case, such an $\ell$ is called a \emph{Lefschetz element} of $A$. 
\end{defn}

Throughout this paper, we restrict our attention to Artinian algebras defined by monomial ideals. In this setting, it suffices to choose the Lefschetz element as the sum of all variables.

\begin{prop}[{\rm \cite[Proposition~2.2]{MMN2011}}]\label{Proposition2.5}
	Let $I \subset R=\Bbbk[x_1,\ldots,x_n]$ be an Artinian monomial ideal. Then the algebra $A=R/I$ has the WLP if and only if $\ell=x_1+x_2+\cdots+x_n$ is a Lefschetz element of $A$.
\end{prop}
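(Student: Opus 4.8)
The \emph{if} direction is immediate: if $\ell=x_1+\cdots+x_n$ is a Lefschetz element, then $A$ has the WLP by definition. The entire content therefore lies in the \emph{only if} direction, and the plan is to exploit two features of the monomial setting: first, that ``having maximal rank'' is a Zariski-open condition on the choice of linear form, and second, that a monomial ideal is invariant under the natural algebraic torus action. I would work under the hypothesis that $\kk$ is infinite (guaranteed, for instance, in characteristic $0$), which is precisely what makes the density argument below run.

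First I would establish openness. Fix a degree $j$ and write a general linear form as $\ell=\sum_{i}a_ix_i$. In the monomial bases of $[A]_j$ and $[A]_{j+1}$ the map $\cdot\ell\colon[A]_j\to[A]_{j+1}$ is represented by a matrix whose entries are linear in $(a_1,\dots,a_n)$. Maximal rank amounts to the non-vanishing of at least one maximal minor, so the set of forms for which $\cdot\ell$ has maximal rank in degree $j$ is Zariski open in $[A]_1$. Intersecting over the finitely many relevant degrees, the locus $U\subseteq[A]_1$ of linear forms that are Lefschetz elements is Zariski open, and $A$ has the WLP exactly when $U\neq\varnothing$.

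Next I would bring in the symmetry. Let the torus $T=(\kk^{\ast})^{n}$ act on $R$ by $t\cdot x_i=t_ix_i$; since $I$ is monomial it is $T$-stable, so each $t$ induces a graded automorphism $\varphi_t$ of $A$ with $\varphi_t(x_i)=t_ix_i$. The decisive identity is
\[
\varphi_t(\ell\cdot f)=(t\cdot\ell)\,\varphi_t(f),\qquad\text{where } t\cdot\ell:=\sum_{i}t_ia_ix_i,
\]
which says that $\cdot(t\cdot\ell)=\varphi_t\circ(\cdot\ell)\circ\varphi_t^{-1}$ on each graded piece; conjugate maps have equal rank, so $\ell\in U$ if and only if $t\cdot\ell\in U$, i.e. $U$ is $T$-invariant. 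To finish, note that $U$ is a nonempty open subset of the irreducible affine space $[A]_1$ over the infinite field $\kk$, hence dense, while the set $O$ of forms with all coordinates (on the variables surviving in $A$) nonzero is a dense open single $T$-orbit, namely the orbit of $\sum_i x_i$. Two dense opens in an irreducible variety meet, so $U\cap O\neq\varnothing$; since $U$ is $T$-invariant and $O$ is a single $T$-orbit, it follows that $O\subseteq U$. In particular $\sum_i x_i\in U$, i.e. it is a Lefschetz element.

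The main obstacle is the interface between these two ingredients. On the one side one must confirm that the multiplication matrices depend algebraically on the coefficients so that maximal rank is genuinely an open condition, and on the other side one must insist that $\kk$ be infinite, for over a finite field a nonempty open need not be dense and the argument breaks. A minor technical wrinkle is the degenerate situation in which some $x_i\in I$, so that $x_i=0$ in $A$ and $[A]_1$ has dimension smaller than $n$; there one simply reads the support and the orbit $O$ inside $[A]_1$ itself, and the same transport argument delivers the conclusion.
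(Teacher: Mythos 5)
Your proof is correct and is essentially the same argument as the one in \cite{MMN2011}, which this paper cites without reproducing: maximal rank of the multiplication maps is a Zariski-open condition on the coefficients of the linear form, and the diagonal torus action preserving the monomial ideal conjugates multiplication by any form with all coordinates nonzero into multiplication by $x_1+\cdots+x_n$, so a Lefschetz element can be rescaled to the sum of the variables. You also correctly flag the one hypothesis this argument needs, namely that $\kk$ be infinite (automatic in the paper's standing assumption $\Char(\kk)=0$), which is exactly the setting of the cited result.
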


Next, we present a necessary condition for an Artinian graded algebra to have the WLP.  
Before that, we introduce some notation.
\begin{defn}
	Let $\Bbbk$ be a field and $A= \bigoplus_{j\geq 0} [A]_j$ be a standard graded $\Bbbk$-algebra. The \emph{Hilbert series} of $A$ is the power series 
	\[
	HS(A,t) = \sum_{j\geq 0} \dim_\Bbbk [A]_j \, t^j.
	\]
	If $A$ is Artinian, then $[A]_i=0$ for $i \gg 0$. We denote 
	\[
	D := \max \{i \mid [A]_i \neq 0\},
	\]
	and we call $D$ the \emph{socle degree} of $A$. 
	In this case, the Hilbert series of $A$ is a polynomial
	\[
	HS(A,t) = 1 + h_1 t + \cdots + h_D t^D,
	\]
	where $h_i = \dim_\Bbbk [A]_i > 0$. 
\end{defn}

\begin{defn}\label{mode}
	A polynomial $\sum_{k=0}^n a_k t^k$ with non-negative coefficients is called \emph{unimodal} if there exists an integer $m$ such that 
	\[
	a_0 \leq a_1 \leq \cdots \leq a_{m-1} \leq a_m \geq a_{m+1} \geq \cdots \geq a_n.
	\]
	Set $a_{-1}=0$. The \emph{mode} of a unimodal polynomial $\sum_{k=0}^n a_k t^k$ is defined to be the unique integer $i \in \{0,\dots,n\}$ such that 
	\[
	a_{i-1} < a_i \geq a_{i+1} \geq \cdots \geq a_n.
	\]
\end{defn}

\begin{prop}[{\rm \cite[Proposition~3.2]{HMMNWW2013}}]\label{Proposition2.8}
	If $A$ has the WLP, then the Hilbert series of $A$ is unimodal.
\end{prop}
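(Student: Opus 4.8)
The plan is to exploit the defining feature of the WLP---maximal rank of each multiplication map $\cdot\ell$---to force the Hilbert function $h_i=\dim_\kk[A]_i$ to first increase and then decrease. First I would fix a Lefschetz element $\ell\in[A]_1$, so that for every $i$ the map $\cdot\ell\colon[A]_i\to[A]_{i+1}$ is either injective or surjective. Translating maximal rank into inequalities on dimensions: injectivity at level $i$ forces $h_i\le h_{i+1}$, while surjectivity forces $h_i\ge h_{i+1}$.

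The heart of the argument is a propagation lemma: \emph{if $\cdot\ell\colon[A]_i\to[A]_{i+1}$ is surjective, then $\cdot\ell\colon[A]_{i+1}\to[A]_{i+2}$ is surjective as well.} To prove it I would use that $A$ is standard graded, so that $[A]_{i+2}=[A]_1\cdot[A]_{i+1}$ and $[A]_1\cdot[A]_i=[A]_{i+1}$, together with the surjectivity hypothesis $[A]_{i+1}=\ell\cdot[A]_i$ and the commutativity of $A$:
\[
[A]_{i+2}=[A]_1\cdot[A]_{i+1}=[A]_1\cdot(\ell\,[A]_i)=\ell\cdot([A]_1\cdot[A]_i)=\ell\cdot[A]_{i+1}.
\]
Hence, once a multiplication map is surjective, every subsequent one is surjective too.

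With the lemma in hand, I would let $s$ be the smallest index for which $\cdot\ell\colon[A]_s\to[A]_{s+1}$ is surjective (if no such index exists, every map is injective and $h_0\le h_1\le\cdots\le h_D$ is already unimodal). For $k<s$ the map $\cdot\ell\colon[A]_k\to[A]_{k+1}$ is not surjective, so by the WLP it is injective, which gives $h_0\le h_1\le\cdots\le h_s$. For $k\ge s$ the propagation lemma makes every map surjective, which gives $h_s\ge h_{s+1}\ge\cdots\ge h_D$. Concatenating these two chains shows that $h$ weakly increases up to degree $s$ and weakly decreases afterward, i.e.\ it is unimodal, completing the proof.

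I expect the propagation lemma to be the only nontrivial ingredient; everything else is the routine bookkeeping of inequalities along the Hilbert function. The one point I would be careful about is a bijective map (occurring when $h_i=h_{i+1}$), which counts as surjective and therefore triggers the propagation. This does not disturb the conclusion, since an equality $h_i=h_{i+1}$ is compatible with both the non-decreasing and the non-increasing portions of the unimodal chain.
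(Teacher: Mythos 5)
Your proof is correct, and since the paper does not prove this statement itself but only cites \cite[Proposition~3.2]{HMMNWW2013}, the right comparison is with that cited result: your argument is precisely the standard one behind it. The key propagation lemma---that surjectivity of $\cdot\ell\colon[A]_i\to[A]_{i+1}$ forces surjectivity of $\cdot\ell\colon[A]_{i+1}\to[A]_{i+2}$ because $A$ is standard graded, so $[A]_{i+2}=[A]_1\cdot[A]_{i+1}=\ell\cdot([A]_1\cdot[A]_i)=\ell\cdot[A]_{i+1}$---together with the observation that maximal rank plus failure of surjectivity forces injectivity, is exactly how the unimodality conclusion is obtained there, so nothing further is needed.
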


Finally, to study the failure of the WLP of tensor products of $\kk$-algebras, the following simple lemma turns out to be quite useful. 

\begin{lem}{\rm\cite[Lemma 7.8]{BMMNZ12}}\label{lem_tensor}
	Let $A=A'\otimes_\kk A''$ be a tensor product of two graded Artinian $\kk$-algebras $A'$ and $A''$. Let $\ell'\in A'$ and $\ell''\in A''$ be linear elements, and set $\ell=\ell'+\ell''=\ell'\otimes 1 + 1\otimes \ell''\in A$. Then
	\begin{enumerate}
		\item [\rm (a)] If the multiplication maps $\cdot\ell': [A']_{i}\longrightarrow [A']_{i+1}$ and $\cdot\ell'': [A'']_{j}\longrightarrow [A'']_{j+1}$ are both not surjective, then neither is the map
		$$
		\cdot\ell: [A]_{i+j+1}\longrightarrow [A]_{i+j+2}.
		$$
		\item [\rm (b)] If the multiplication maps $\cdot\ell': [A']_{i}\longrightarrow [A']_{i+1}$ and $\cdot\ell'': [A'']_{j}\longrightarrow [A'']_{j+1}$ are both not injective, then neither is the map
		$$
		\cdot\ell: [A]_{i+j}\longrightarrow [A]_{i+j+1}.
		$$
		
	\end{enumerate}
\end{lem}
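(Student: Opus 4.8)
The plan is to exploit the bigraded structure of the tensor product. Since $A'$ and $A''$ are graded Artinian $\kk$-algebras, each graded piece is a finite-dimensional $\kk$-vector space, and for every degree $d$ one has the decomposition
\[
[A]_d = \bigoplus_{p+q=d} [A']_p \otimes_\kk [A'']_q.
\]
Under this decomposition, multiplication by $\ell = \ell'\otimes 1 + 1\otimes \ell''$ acts on a simple tensor $a'\otimes a''$ with $a'\in[A']_p$, $a''\in[A'']_q$ by
\[
\ell\cdot(a'\otimes a'') = (\ell' a')\otimes a'' + a'\otimes(\ell'' a''),
\]
so the summand $[A']_p\otimes[A'']_q$ is sent into $([A']_{p+1}\otimes[A'']_q)\oplus([A']_p\otimes[A'']_{q+1})$. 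Keeping track of which summands are hit is the organizing principle of the whole argument.

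For part (b) I would argue directly. Non-injectivity of $\cdot\ell'$ in degree $i$ produces a nonzero $u\in[A']_i$ with $\ell' u = 0$, and non-injectivity of $\cdot\ell''$ in degree $j$ produces a nonzero $v\in[A'']_j$ with $\ell'' v = 0$. Then $u\otimes v$ lies in $[A']_i\otimes[A'']_j\subseteq[A]_{i+j}$, is nonzero because the tensor product of nonzero elements over a field is nonzero, and satisfies
\[
\ell\cdot(u\otimes v) = (\ell' u)\otimes v + u\otimes(\ell'' v) = 0.
\]
Hence $u\otimes v$ is a nonzero kernel element, so $\cdot\ell\colon[A]_{i+j}\to[A]_{i+j+1}$ is not injective.

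For part (a) I would pass to dual functionals. Non-surjectivity of $\cdot\ell'\colon[A']_i\to[A']_{i+1}$ yields a nonzero $\phi\in([A']_{i+1})^*$ vanishing on the image, i.e.\ $\phi(\ell' a')=0$ for all $a'\in[A']_i$; similarly there is a nonzero $\psi\in([A'']_{j+1})^*$ with $\psi(\ell'' a'')=0$ for all $a''\in[A'']_j$. I would then define a functional $\Phi$ on $[A]_{i+j+2}$ to be $\phi\otimes\psi$ on the summand $[A']_{i+1}\otimes[A'']_{j+1}$ and zero on every other summand; this $\Phi$ is nonzero since $\phi,\psi$ are. The claim is that $\Phi$ annihilates the image of $\cdot\ell$ coming from $[A]_{i+j+1}$. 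By the bigraded bookkeeping above, only the source summands $[A']_i\otimes[A'']_{j+1}$ and $[A']_{i+1}\otimes[A'']_j$ can contribute to the $(i+1,j+1)$ component of the image, and on these two summands the surviving contributions are $\phi(\ell' a')\,\psi(a'')$ and $\phi(a')\,\psi(\ell'' a'')$ respectively, both of which vanish by the choice of $\phi$ and $\psi$. Consequently $\Phi$ vanishes on the whole image, and $\cdot\ell\colon[A]_{i+j+1}\to[A]_{i+j+2}$ is not surjective.

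The main obstacle I anticipate is purely organizational rather than conceptual: in part (a) one must verify carefully that the only source summands mapping into the fixed target summand $[A']_{i+1}\otimes[A'']_{j+1}$ are the two identified above, and that within each the ``wrong'' term of $\ell\cdot(a'\otimes a'')$ lands in a summand on which $\Phi$ is identically zero. Once this tracking is carried out, both the nonvanishing of $\Phi$ and its vanishing on the image are immediate, and part (b) is essentially formal.
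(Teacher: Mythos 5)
Your proof is correct. Note that the paper itself gives no proof of this lemma---it is quoted verbatim from the reference [BMMNZ12, Lemma~7.8]---so there is no in-paper argument to compare against; your proposal in fact supplies the missing details. Both halves are sound: in (b) the kernel element $u\otimes v$ works exactly as you say, and in (a) your bookkeeping is right---the only bigraded source summands of $[A]_{i+j+1}$ that $\cdot\ell$ can send into $[A']_{i+1}\otimes[A'']_{j+1}$ are $[A']_i\otimes[A'']_{j+1}$ and $[A']_{i+1}\otimes[A'']_j$, and on these the relevant components of the image are killed by $\phi\otimes\psi$, so the nonzero functional $\Phi$ annihilates the whole image. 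This bigraded decomposition with a kernel-tensor argument for injectivity and a dual-functional (equivalently, cokernel-tensor) argument for surjectivity is the standard route and is essentially the argument of the cited reference.
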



\subsection{Graph theory}

Throughout this subsection, by a \emph{graph} we mean a simple, undirected graph $G=(V,E)$ with vertex set $V=V(G)$ and edge set $E=E(G)$. 
We begin by recalling some basic definitions.
\begin{defn}
The \emph{disjoint union} of two graphs $G_1$ and $G_2$ is the graph $G = G_1 \cup G_2$ whose vertex set is the disjoint union of $V(G_1)$ and $V(G_2)$, and whose edge set is the disjoint union of $E(G_1)$ and $E(G_2)$. 
\end{defn}
\begin{defn}
Let $G=(V,E)$ be a graph with vertex set $V=\{1,2,\dots,n\}$.
\begin{itemize}
\item[(a)] A subset $X \subset V$ is called an \emph{independent set} if, for all $i,j \in X$, we have $\{i,j\} \notin E$. An independent set $X$ of cardinality $k$ is called a \emph{$k$-independent set} of $G$.	
\item[(b)] The \emph{independence number} of $G$ is the maximum cardinality of an independent set in $G$, denoted by $\alpha(G)$. 
\item[(c)] The \emph{independence polynomial} of $G$ is the polynomial in the variable $t$ whose coefficient of $t^k$ is the number of $k$-independent sets. We denote it by $I(G;t)$, that is,
\[
I(G;t)= \sum_{k=0}^{\alpha(G)} s_k(G)t^k,
\]
where $s_k(G)$ denotes the number of $k$-independent sets of $G$.
\end{itemize}
\end{defn}

The independence polynomial of a graph was defined by Gutman and Harary in \cite{GH83} as a generalization of the matching polynomial of a graph.
For a vertex $v \in V$, the \emph{open neighborhood} of $v$ is defined by
\[
N(v) = \{\, u \in V \mid \{u,v\} \in E \,\},
\]
and the \emph{closed neighborhood} of $v$ is given by
\[
N[v] = N(v) \cup \{v\}.
\]

The following result is useful for computing independence polynomials of graphs.

\begin{prop}[{\rm \cite[Theorem~2.3 and Corollary~3.3]{HL94}}]\label{FormulaforIndependence}
Let $G,G_1,G_2$ be graphs and $v$ be a vertex of $G$. Then the following equalities hold:
\begin{itemize}
\item[(a)] $I(G;t) = I(G \setminus v;t) + t \, I(G \setminus N[v];t)$;
\item[(b)] $I(G_1 \cup G_2;t) = I(G_1;t) \cdot I(G_2;t)$.  
\end{itemize}
\end{prop}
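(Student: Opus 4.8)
The plan is to prove both identities by the generating-function method, working directly from the definition $I(G;t)=\sum_{k\ge 0} s_k(G)\,t^k$, where $s_k(G)$ counts the $k$-independent sets of $G$. In each case I will exhibit a combinatorial decomposition of the collection of all independent sets and then translate it into the corresponding polynomial identity by tracking cardinalities.

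For part (a), I would fix the vertex $v$ and split the independent sets of $G$ into two classes according to whether they contain $v$. Any independent set $X$ with $v\notin X$ uses only vertices of $V(G)\setminus\{v\}$ and satisfies there the same non-adjacency constraints, so these sets are precisely the independent sets of $G\setminus v$; summing $t^{|X|}$ over them gives $I(G\setminus v;t)$. For the sets with $v\in X$, I would observe that independence forces $X\cap N(v)=\emptyset$, so $X\setminus\{v\}$ is an independent set of $G\setminus N[v]$; conversely $Y\mapsto Y\cup\{v\}$ sends any independent set $Y$ of $G\setminus N[v]$ to an independent set of $G$ containing $v$. The key point to verify is that these two maps are mutually inverse bijections and that passing from $X$ to $X\setminus\{v\}$ lowers the cardinality by exactly one, so that summing $t^{|X|}=t\cdot t^{|X\setminus\{v\}|}$ over this class yields $t\,I(G\setminus N[v];t)$. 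Adding the two contributions produces the claimed recursion.

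For part (b), I would use that in a disjoint union there are no edges joining $V(G_1)$ to $V(G_2)$. Each independent set $X$ of $G_1\cup G_2$ decomposes uniquely as $X=X_1\sqcup X_2$ with $X_i=X\cap V(G_i)$, and $X$ is independent exactly when $X_1$ is independent in $G_1$ and $X_2$ is independent in $G_2$; moreover $|X|=|X_1|+|X_2|$. Hence
\[
I(G_1\cup G_2;t)=\sum_{X} t^{|X|}=\sum_{X_1}\sum_{X_2} t^{|X_1|+|X_2|}=\Big(\sum_{X_1}t^{|X_1|}\Big)\Big(\sum_{X_2}t^{|X_2|}\Big)=I(G_1;t)\cdot I(G_2;t),
\]
where the inner sums range over independent sets of the indicated graphs. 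At the level of coefficients this is just the Cauchy-product identity $s_k(G_1\cup G_2)=\sum_{i+j=k} s_i(G_1)\,s_j(G_2)$.

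Neither part presents a genuine obstacle, since the arguments are elementary bookkeeping. The only points requiring care are confirming that the correspondence in (a) is well defined in both directions—in particular that deleting or adjoining $v$ preserves independence and shifts the size by precisely one—and, in (b), that the factorization of independent sets across the disjoint union is exactly compatible with multiplication of the two generating functions.
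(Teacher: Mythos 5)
Your proof is correct and complete: the bijections in (a) are well defined in both directions, and the Cauchy-product argument in (b) is exactly right. Note that the paper itself gives no proof of this proposition --- it is quoted verbatim from the reference \cite{HL94} --- and your argument (splitting independent sets of $G$ according to whether they contain $v$, and factoring independent sets of a disjoint union across the two components) is the standard one underlying that cited result, so there is nothing of substance to compare beyond saying your write-up would serve as a self-contained replacement for the citation.
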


\subsection{Artinian algebras associated to graphs} 

Let $G=(V,E)$ be a graph with vertex set $V=\{1,2,\dots,n\}$. 
Let $R=\Bbbk[x_1,x_2,\dots,x_n]$ be the standard graded polynomial ring over a field $\Bbbk$. 
The \emph{edge ideal} of $G$ is defined by
\[
I(G)=(x_ix_j\mid \{i,j\}\in E)\subset R.
\]
Then, we say that 
$$A(G)=\frac{R}{(x_1^2,\ldots,x_n^2)+I(G)}$$
is the {\it artinian algebra associated to $G$}. The algebra $A(G)$ contains significant combinatorial information about $G$, as witnessed by
\begin{prop}[{\rm \cite[Proposition~2.10]{NT2024}}]
	The Hilbert series of $A(G)$ coincides with the independence polynomial of $G$, that is,
	\[
	HS(A(G),t) = I(G;t).
	\]
\end{prop}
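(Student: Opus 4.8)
The plan is to exploit the fact that the defining ideal of $A(G)$ is a monomial ideal, so that $A(G)$ inherits a $\kk$-basis consisting of the residues of those monomials of $R$ that do not lie in $J:=(x_1^2,\ldots,x_n^2)+I(G)$. First I would recall the standard fact that for any monomial ideal $J\subset R$ the monomials of $R$ not belonging to $J$ project to a $\kk$-basis of $R/J$, and that this basis respects the grading. Hence it suffices to count, in each degree $k$, the monomials of $R$ that avoid $J$, and to match this count against $s_k(G)$.

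Next I would describe these surviving monomials explicitly. A monomial lies in $(x_1^2,\ldots,x_n^2)$ precisely when it is divisible by some $x_i^2$, that is, exactly when it fails to be squarefree; so every surviving monomial must be squarefree. A squarefree monomial is uniquely of the form $x_S:=\prod_{i\in S}x_i$ for a subset $S\subseteq V$, and its degree is $\abs{S}$. Moreover $x_S$ lies in $I(G)$ if and only if it is divisible by some generator $x_ix_j$ with $\{i,j\}\in E$, which occurs if and only if $S$ contains both endpoints of some edge.

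The key step is then the identification: the residue of $x_S$ is nonzero in $A(G)$ if and only if $S$ contains no edge of $G$, i.e. if and only if $S$ is an independent set. Consequently $\{\,x_S \mid S \text{ an independent set of } G\,\}$ is a $\kk$-basis of $A(G)$, and its degree-$k$ part is indexed by the $k$-independent sets. This gives $\dim_\kk[A(G)]_k=s_k(G)$ for every $k$, and summing over $k$ yields
\[
HS(A(G),t)=\sum_{k\geq 0}\dim_\kk[A(G)]_k\,t^k=\sum_{k=0}^{\alpha(G)}s_k(G)\,t^k=I(G;t).
\]

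I do not anticipate a genuine obstacle here, as the argument is a direct combinatorial reading of a monomial basis. The only point requiring slight care is confirming that the two families of generators interact cleanly: passing first to squarefree monomials (killing the $x_i^2$) and then discarding those divisible by an edge generator (the $x_ix_j$) must reproduce exactly the notion of an independent set, with no overcounting and no hidden relations. This is precisely what the monomiality of the entire ideal $J$ guarantees, so the correspondence between basis elements and independent sets is a bijection degree by degree.
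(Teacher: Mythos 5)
Your proof is correct and is exactly the standard argument for this fact: since the defining ideal is monomial, the residues of standard monomials form a graded $\kk$-basis, and the squarefree monomials surviving both $(x_1^2,\ldots,x_n^2)$ and $I(G)$ are precisely the monomials $x_S$ with $S$ independent, giving $\dim_\kk [A(G)]_k = s_k(G)$. The paper itself cites this result from \cite[Proposition~2.10]{NT2024} without reproving it, and the cited proof proceeds by the same monomial-basis correspondence, so there is nothing to add.
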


Nguyen and Tran~\cite{NT2024} studied the independence polynomials of path graphs $P_n$ on $n$ vertices as well as their associated Artinian algebras. 
We recall some necessary results later. Denote by $\lambda_n$ the mode of the independence polynomial of $P_n$. 
\begin{table}[!h]
	\caption{Mode of $I(P_n;t)$}
	\label{tab_indpoly}
	\begin{tabular}{| c  | c  | c | c | c | c | c | c | c |  c | c |  c  |  c  | c  | c | c | c| c| c| c| c| c| c| }
		\hline
		$n$ &  1 & 2 & 3 & 4 & 5 & 6 & 7 & 8 & 9 & 10 & 11 & 12 & 13 &14&15 & 16 & 17 & 18 & 19 &20 \\
		\hline
		$\lambda_n$        &     0  & 1  & 1  & 1  & 2  & 2 & 2 & 2 & 3  & 3 & 3 & 4 & 4 & 4 & 4& 5& 5 & 5 & 5 &6\\
		\hline
	\end{tabular}
\end{table}
Table \ref{tab_indpoly} provides information about the initial values of the mode $\lambda_n$ of  the independence polynomial $I(P_n;t)$, by using {\tt Macaulay2} \cite{M2codes}.

\begin{lem}[{\rm \cite[Lemma~3.2]{NT2024}}]\label{lambda_n < lambda_n+1}
For any $n\geq 1$, $\lambda_n\le \lambda_{n+1}\le \lambda_n+1.$
\end{lem}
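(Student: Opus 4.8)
The plan is to prove, by induction on $n$, a statement slightly stronger than the assertion. Applying Proposition~\ref{FormulaforIndependence}(a) to an endpoint $v$ of the path $P_{n+1}$ (so that $P_{n+1}\setminus v = P_n$ and $P_{n+1}\setminus N[v] = P_{n-1}$) yields the recurrence $I(P_{n+1};t) = I(P_n;t) + t\,I(P_{n-1};t)$, and hence, comparing coefficients,
\[
s_k(P_{n+1}) = s_k(P_n) + s_{k-1}(P_{n-1}) \qquad \text{for all } k.
\]
Because the notion of mode presupposes unimodality, I would carry as the induction hypothesis three properties of $P_n$ at once: (i) $I(P_n;t)$ is unimodal; (ii) its ascending part is \emph{strictly} increasing, i.e. $s_0(P_n) < s_1(P_n) < \cdots < s_{\lambda_n}(P_n)$; and (iii) $\lambda_{n-1} \le \lambda_n \le \lambda_{n-1}+1$. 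Clause (ii) is the key bookkeeping device, since the definition of the mode gives only $s_{\lambda_n-1}(P_n) < s_{\lambda_n}(P_n)$, whereas strictness throughout $[0,\lambda_n]$ is what makes the argument below go through. The base cases $n=1,2$ are checked directly from $I(P_1;t)=1+t$ and $I(P_2;t)=1+2t$.

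For the inductive step I would study the sign of the first differences $\Delta_k := s_k(P_{n+1}) - s_{k-1}(P_{n+1})$, which the coefficient recurrence decomposes as
\[
\Delta_k = \bigl(s_k(P_n) - s_{k-1}(P_n)\bigr) + \bigl(s_{k-1}(P_{n-1}) - s_{k-2}(P_{n-1})\bigr).
\]
By hypothesis (iii) we have $\lambda_{n-1}+1 \in \{\lambda_n,\lambda_n+1\}$, so there are exactly two cases. If $\lambda_{n-1} = \lambda_n - 1$, then for $k \le \lambda_n$ the first summand is $>0$ (strict ascent of $P_n$) and the second is $\ge 0$ (ascent of $P_{n-1}$, valid since $k-1 \le \lambda_{n-1}$), whence $\Delta_k > 0$; and for $k \ge \lambda_n+1$ both summands are $\le 0$, whence $\Delta_k \le 0$. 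Thus $I(P_{n+1};t)$ is strictly increasing on $[0,\lambda_n]$ and non-increasing afterward, so it is unimodal with $\lambda_{n+1} = \lambda_n$. If instead $\lambda_{n-1} = \lambda_n$, the same computation gives $\Delta_k > 0$ for $k \le \lambda_n$ and $\Delta_k \le 0$ for $k \ge \lambda_n+2$, leaving only the single step $k = \lambda_n+1$ undetermined; there the first summand is $\le 0$ while the second is $>0$ (it is the strict-ascent step at the mode of $P_{n-1}$), so its sign may go either way.

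In that remaining sub-case the sequence strictly ascends up to $\lambda_n$, makes one step of unknown sign, and then descends from $\lambda_n+1$ onward; this forces unimodality with mode equal to $\lambda_n$ or $\lambda_n+1$ according as $\Delta_{\lambda_n+1} \le 0$ or $\Delta_{\lambda_n+1}>0$. In either outcome $\lambda_n \le \lambda_{n+1} \le \lambda_n+1$, and one checks that the strengthened clause (ii) is inherited by $P_{n+1}$ up to its new mode, completing the induction. The main obstacle is precisely this last sub-case: the sum of two unimodal sequences need not be unimodal in general, and what rescues the argument is that hypothesis (iii) confines the two relevant modes, $\lambda_n$ and $\lambda_{n-1}+1$, to within one of each other, so that all potential non-unimodality is squeezed into the single transition step $\lambda_n \to \lambda_n+1$. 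A secondary subtlety is that one must propagate the sharper clause (ii), rather than plain unimodality, through the induction, as it is exactly the strictness of the ascent that pins down $\lambda_{n+1} \ge \lambda_n$.
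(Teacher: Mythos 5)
Your proof is correct. The one caveat worth flagging is that this paper never proves the lemma at all: it is quoted verbatim from \cite[Lemma~3.2]{NT2024}, so there is no internal argument to compare against, and your write-up is a genuinely self-contained alternative. What is striking is that your route reconstructs, almost exactly, the machinery this paper uses \emph{later} for lollipop graphs: your coefficient-level case analysis of $I(P_{n+1};t)=I(P_n;t)+t\,I(P_{n-1};t)$ is precisely the content of Lemma~\ref{unimodal_mode_unit}, applied to $f=I(P_n;t)$ (mode $\lambda_n$) and $g=t\,I(P_{n-1};t)$ (mode $\lambda_{n-1}+1$, within $1$ of $\lambda_n$ by induction), and your induction is the same pattern as Proposition~\ref{mode of lollipop} and Remark~\ref{constant mode}. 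Your approach also buys something the paper imports from elsewhere: you establish unimodality of $I(P_n;t)$ inside the induction, whereas the paper gets it from log-concavity of claw-free graphs \cite{Hamidoune90}, and it avoids the explicit binomial formula $s_k(P_n)=\binom{n-k+1}{k}$ on which a direct computation of $\lambda_n$ would rely.

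One secondary point: your clause (ii), strictness of the \emph{entire} ascent, is not actually needed, contrary to your claim that it is what ``pins down $\lambda_{n+1}\ge\lambda_n$.'' Under Definition~\ref{mode}, the mode already encodes a strict increase at the final step into the peak ($a_{\mu-1}<a_\mu$) together with a non-increasing tail, and the mode of a unimodal sequence is characterized as the last index of strict increase after which the sequence is non-increasing. In your Case~A, even if the ascent of $P_n$ had flat steps below $\lambda_n$, the sum would still be non-decreasing on $[0,\lambda_n-1]$, strictly increasing into $\lambda_n$ (strict $f$-step plus non-negative $g$-step), and non-increasing afterwards, which already forces $\lambda_{n+1}=\lambda_n$; Case~B is analogous. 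This is exactly why the paper's Lemma~\ref{unimodal_mode_unit} needs no full-strictness hypothesis. So your proof is valid but carries redundant bookkeeping; dropping clause (ii) and invoking only the strict step built into the definition of mode would shorten it to the argument of Lemma~\ref{unimodal_mode_unit} plus induction.
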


\begin{thm}[{\rm \cite[Theorem~4.2 and Proposition~4.3]{NT2024}}]\label{WLP for P_n}
	Suppose that $\operatorname{char}(\Bbbk) = 0$. Then the algebra $A(P_n)$ has the WLP if and only if $n \in \{1, 2, \dots, 7, 9, 10, 13\}.$
	Furthermore:
	\begin{enumerate}[\quad \rm (i)]
		\item For any $n \ge 17$, $A(P_n)$ fails surjectivity from degree $\lambda_n$ to degree $\lambda_n + 1$.
		\item If $n \ge 12$ and $\lambda_n = \lambda_{n-1} + 1$, then $A(P_n)$ fails injectivity from degree $\lambda_n - 1$ to degree $\lambda_n$.
		\item If $n \in \{8, 11, 14, 15, 17\}$, then $A(P_n)$ fails only surjectivity from degree $\lambda_n$ to degree $\lambda_n + 1$.
	\end{enumerate}
\end{thm}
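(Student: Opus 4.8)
The plan is to fix the candidate Lefschetz element $\ell = x_1 + \cdots + x_n$, which is legitimate by Proposition~\ref{Proposition2.5}, and to test maximal rank of each map $\cdot\ell : [A(P_n)]_i \to [A(P_n)]_{i+1}$ degree by degree. Since $HS(A(P_n),t) = I(P_n;t)$ is unimodal with mode $\lambda_n$ (Table~\ref{tab_indpoly}, Lemma~\ref{lambda_n < lambda_n+1}), the maximal-rank condition unwinds into two halves: injectivity in every degree $i \le \lambda_n - 1$ (where $h_i \le h_{i+1}$) and surjectivity in every degree $i \ge \lambda_n$ (where $h_i \ge h_{i+1}$). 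Thus WLP is equivalent to the vanishing of $\ker(\cdot\ell)$ below the mode and of $\operatorname{coker}(\cdot\ell)$ at and above the mode, and the assertions (i)--(iii) amount to locating the first degree where one of these defects becomes nonzero.

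The engine of the proof is an induction on $n$ built from the deletion recurrence. Taking $v$ to be an endpoint of $P_n$, Proposition~\ref{FormulaforIndependence}(a) gives $I(P_n;t) = I(P_{n-1};t) + t\,I(P_{n-2};t)$, which lifts to the short exact sequence of graded modules
\[
0 \longrightarrow A(P_{n-2})(-1) \xrightarrow{\ \cdot x_n\ } A(P_n) \longrightarrow A(P_{n-1}) \longrightarrow 0,
\]
in which the submodule is $x_n A(P_n) \cong A(P_{n-2})(-1)$ and the quotient is $A(P_n)/(x_n)$. A direct check using $x_n^2 = x_{n-1}x_n = 0$ shows that $\ell = \sum_{i=1}^n x_i$ acts on the sub and quotient exactly as the sum-of-variables forms for $P_{n-2}$ and $P_{n-1}$, so multiplication by $\ell$ is a morphism of short exact sequences. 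Applying the snake lemma to the pair of rows in degrees $i$ and $i+1$ yields a long exact sequence linking $\ker(\cdot\ell)$ and $\operatorname{coker}(\cdot\ell)$ for $P_n$, $P_{n-1}$, and $P_{n-2}$, from which I would extract recurrences for the defect dimensions.

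Running this induction, I expect injectivity below the mode and surjectivity above the mode to propagate cleanly in all but the two critical degrees straddling $\lambda_n$; the behaviour there is governed by whether the mode is flat ($\lambda_n = \lambda_{n-1}$) or jumps ($\lambda_n = \lambda_{n-1}+1$), which is precisely the dichotomy of Lemma~\ref{lambda_n < lambda_n+1}. The induction therefore splits along this dichotomy: a mode jump should feed a nonzero class through the connecting homomorphism into $\ker(\cdot\ell)$ at degree $\lambda_n - 1$, producing the injectivity failure of (ii), whereas accumulated cokernel on the decreasing side produces the surjectivity failure at $\lambda_n$ recorded in (i) and (iii).

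The main obstacle is pinning down the \emph{exact} rank at the mode rather than a mere inequality, since this is where maximal rank is won or lost and where the sporadic list $\{8,11,14,15,17\}$ and the cutoff $n \ge 17$ originate. Concretely, one must compute the connecting map and the precise defect dimensions in the two critical degrees and control how they evolve with $n$; once the defect is shown to be monotone (so that a single surjectivity failure persists for all larger $n$ of the same mode type), the classification follows by anchoring the induction with explicit computations for small $n$ via {\tt Macaulay2} \cite{M2codes}, confirming WLP precisely for $n\in\{1,\dots,7,9,10,13\}$ together with the stated failure patterns.
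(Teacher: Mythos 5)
This theorem is not proved in the paper at all: it is imported verbatim from \cite[Theorem~4.2 and Proposition~4.3]{NT2024}, so there is no internal proof to compare your attempt against. Judged on its own terms, your setup is sound and is in fact the same machinery that the cited source (and this paper, in Section~5) uses: the short exact sequence
\[
0 \longrightarrow A(P_{n-2})(-1) \xrightarrow{\ \cdot x_n\ } A(P_n) \longrightarrow A(P_{n-1}) \longrightarrow 0
\]
is correct because $(0:_{A(P_n)} x_n)=(x_{n-1},x_n)$, the form $\ell$ does restrict to the sum-of-variables forms on the sub and the quotient, and the snake lemma is the right tool, with small cases anchored by \cite{M2codes}.

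The genuine gap is that your argument stops exactly where the theorem starts. The snake lemma gives you two propagation rules: a cokernel of $\cdot\ell$ in degree $i$ for $P_{n-1}$ forces one for $P_n$ in the \emph{same} degree $i$, and a kernel in degree $i$ for $P_{n-2}$ forces one for $P_n$ in degree $i+1$. But these failures live at fixed (or shift-by-one) degrees, while the mode $\lambda_n$ drifts upward with $n$ (Lemma~\ref{lambda_n < lambda_n+1}); a cokernel in a fixed degree $i$ stops being a WLP obstruction as soon as $\lambda_n > i$, since below the mode maximal rank means injectivity and surjectivity is impossible anyway. So the surjectivity failure in (i) must be re-established at the new mode every time $\lambda_n$ jumps --- this is the quantitative heart of the proof, and your proposal only says you ``expect'' the propagation to work and that the conclusion follows ``once the defect is shown to be monotone.'' Similarly, for (ii) you assert that the connecting homomorphism feeds a nonzero class into the kernel at degree $\lambda_n-1$, but you never compute it; note that $h_{\lambda_n-1} < h_{\lambda_n}$ because $\lambda_n$ is the mode, so this injectivity failure cannot be detected by a dimension count and genuinely requires an argument. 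Finally, the word ``only'' in (iii) requires verifying maximal rank in every other degree for $n\in\{8,11,14,15,17\}$ --- a finite computation, but one your plan never addresses. As written, this is a correct strategy outline with all of the decisive steps missing.
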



\section{WLP of tensor products of Artinian monomial algebras and complete quadratic monomial algebras}
Let 
\[
A = \bigoplus_{i=0}^D [A]_i = \frac{\Bbbk[y_1, y_2, \ldots, y_m]}{J}
\]
be an Artinian monomial $\Bbbk$-algebra of socle degree $D > 0$.  
Denote by $h_i = \dim_\Bbbk [A]_i$ the $i$-th entry of the Hilbert function of $A$, and let $\ell = y_1 + y_2 + \cdots + y_m$ be the sum of the variables.  
In this section, we study the weak Lefschetz property of the tensor product
\[
B := \frac{\Bbbk[x_1, x_2, \ldots, x_n]}{(x_1, x_2, \ldots, x_n)^2} \otimes_\Bbbk A.
\]

For each $i = 0, 1, \dots, D$, let $\mathcal{A}_i$ be a $\Bbbk$-basis of $[A]_i$, and let $\mathcal{M}^{\,i}_{i+t}$ denote the matrix representing the linear map
\[
\cdot \ell^{\,t} : [A]_i \longrightarrow [A]_{i+t}
\]
with respect to these bases.  
It is clear that the socle degree of $B$ is $D + 1$, so we may write $B = \bigoplus_{i=0}^{D+1} [B]_i$.  
For each $i = 1, 2, \dots, D$, a $\Bbbk$-basis of $[B]_i$ is given by
\[
\mathcal{B}_i := x_1 \mathcal{A}_{i-1} \sqcup x_2 \mathcal{A}_{i-1} \sqcup \cdots \sqcup x_n \mathcal{A}_{i-1} \sqcup \mathcal{A}_i.
\]
In particular, $\mathcal{B}_0 := \mathcal{A}_0$ is a basis of $[B]_0$, and $\mathcal{B}_{D+1} := \bigsqcup_{i=1}^n x_i \mathcal{A}_D$ is a basis of $[B]_{D+1}$.

\begin{lem}\label{block matrix}
	Set $\ell' = x_1 + x_2 + \cdots + x_n + \ell$.  
	For every $i = 1, 2, \dots, D - 1$, the matrix representing the linear map
	\[
	\cdot \ell' : [B]_i \longrightarrow [B]_{i+1}
	\]
	with respect to the pair of bases $(\mathcal{B}_i, \mathcal{B}_{i+1})$ has the following block form:
	\begin{equation}\label{3.1}
		\begin{bNiceMatrix}[margin]
			\Block[borders={bottom,right}]{4-4}{} 
			\mathcal{M}^{\,i-1}_i & 0_{h_i,h_{i-1}} & \cdots & 0_{h_i,h_{i-1}} & I_{h_i}\\
			0_{h_i,h_{i-1}} & \mathcal{M}^{\,i-1}_i & \cdots & 0_{h_i,h_{i-1}} & I_{h_i}\\
			\vdots & \vdots & \ddots & \vdots & \vdots\\
			0_{h_i,h_{i-1}} & 0_{h_i,h_{i-1}} & \cdots & \mathcal{M}^{\,i-1}_i & I_{h_i}\\
			0_{h_{i+1},h_{i-1}} & 0_{h_{i+1},h_{i-1}} & \cdots & 0_{h_{i+1},h_{i-1}} &
			\Block[borders={top,left}]{1-1}{} \mathcal{M}^{\,i}_{i+1}
		\end{bNiceMatrix},
	\end{equation}
	where $0_{m,n}$ denotes the zero matrix of size $m \times n$, and $I_h$ denotes the identity matrix of size $h \times h$.  
	
	In particular:
	\begin{itemize}
		\item The representation matrix of $\cdot \ell' : [B]_0 \longrightarrow [B]_1$ is
		\begin{equation}\label{3.2}
			\begin{bNiceMatrix}[margin]
				I_1\\
				I_1\\
				\vdots\\
				I_1\\
				\mathcal{M}^{\,0}_1
			\end{bNiceMatrix},
		\end{equation}
		which has size $(n + h_1) \times 1$.
		
		\item The representation matrix of $\cdot \ell' : [B]_D \longrightarrow [B]_{D+1}$ is
		\begin{equation}\label{3.3}
			\begin{bNiceMatrix}
				\Block[borders={right}]{4-4}{} 
				\mathcal{M}^{\,D-1}_D & 0_{h_D,h_{D-1}} & \cdots & 0_{h_D,h_{D-1}} & I_{h_D}\\
				0_{h_D,h_{D-1}} & \mathcal{M}^{\,D-1}_D & \cdots & 0_{h_D,h_{D-1}} & I_{h_D}\\
				\vdots & \vdots & \ddots & \vdots & \vdots\\
				0_{h_D,h_{D-1}} & 0_{h_D,h_{D-1}} & \cdots & \mathcal{M}^{\,D-1}_D & I_{h_D}
			\end{bNiceMatrix},
		\end{equation}
		which has size $n h_D \times (n h_{D-1} + h_D)$.
	\end{itemize}
\end{lem}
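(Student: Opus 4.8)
The plan is to compute directly the image under $\cdot\ell'$ of each element of the basis $\mathcal{B}_i$ and read off its coordinates in $\mathcal{B}_{i+1}$. The decisive structural fact is that in the complete quadratic algebra $C:=\Bbbk[x_1,\ldots,x_n]/(x_1,\ldots,x_n)^2$ one has $x_jx_s=0$ for all $j,s$; consequently, writing $\ell'=(x_1+\cdots+x_n)+\ell$, multiplication by the $x$-part annihilates every element that already carries a factor $x_s$, and only the $\ell$-part survives on such elements. This annihilation is exactly what decouples the $n$ diagonal copies of $\mathcal{M}^{\,i-1}_i$ from one another and from the action of $\ell$ on the $\mathcal{A}_i$-block.

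First I would record the two types of basis vectors in $\mathcal{B}_i$. A basis element is either $x_s a$ with $a\in\mathcal{A}_{i-1}$ and $s\in\{1,\ldots,n\}$, or $a$ with $a\in\mathcal{A}_i$. For the first type, since $x_jx_s=0$ in $C$ we get $\ell'\cdot(x_sa)=(x_1+\cdots+x_n)x_s a+\ell\, x_s a=x_s(\ell a)$, which lies in the block $x_s\mathcal{A}_i$ of $[B]_{i+1}$; expanding $\ell a$ in the basis $\mathcal{A}_i$ shows that this contributes exactly the matrix $\mathcal{M}^{\,i-1}_i$ on the $(s,s)$ diagonal block. For the second type, $\ell'\cdot a=\sum_{s=1}^n x_s a+\ell a$, whose terms $x_s a$ are themselves basis vectors of the block $x_s\mathcal{A}_i$ (giving $I_{h_i}$ in each of the first $n$ row-blocks of the last column) and whose term $\ell a$ lies in $\mathcal{A}_{i+1}$ (giving $\mathcal{M}^{\,i}_{i+1}$ in the bottom-right block). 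Assembling these contributions according to the ordering of $\mathcal{B}_i$ and $\mathcal{B}_{i+1}$ yields precisely the block form \eqref{3.1}.

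Finally I would treat the two boundary indices, which differ only because of a degeneration at the ends of $A$. For $i=0$ the domain is one-dimensional with basis $\{1\}$, and $\ell'\cdot 1=\sum_{s=1}^n x_s+\ell$ produces the single column \eqref{3.2}. For $i=D$ the computation for the elements $x_s a$ is unchanged, giving the diagonal blocks $\mathcal{M}^{\,D-1}_D$, while for $a\in\mathcal{A}_D$ the term $\ell a$ now vanishes because $[A]_{D+1}=0$; hence only the blocks $I_{h_D}$ remain in the last column and the bottom row of \eqref{3.1} disappears, leaving \eqref{3.3}. I do not expect a genuine obstacle here: the whole argument is a bookkeeping exercise once the relation $x_jx_s=0$ is exploited to separate the action of the $x$-part from that of $\ell$. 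The only point that requires care is keeping the block indexing of the two bases consistent, so that the off-diagonal $I_{h_i}$ blocks coming from the $\sum_{s} x_s a$ terms land in the correct row positions of the final column.
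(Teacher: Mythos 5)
Your proposal is correct and follows essentially the same route as the paper's proof: a direct computation of $\ell'$ on the two types of basis vectors in $\mathcal{B}_i$, using the relation $x_jx_s=0$ to reduce $\ell'\cdot(x_s a)$ to $x_s(\ell a)$, and the degeneration $[A]_{D+1}=0$ (respectively $\mathcal{B}_0=\mathcal{A}_0$) for the boundary cases. Your write-up is in fact slightly more explicit than the paper's about why the $x$-part annihilates the $x_s\mathcal{A}_{i-1}$ blocks and why the bottom row vanishes when $i=D$, but the argument is the same.
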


\begin{proof}
Fix $i \in \{1,2,\dots,D-1\}$.  For each $1 \le j \le n$, we have
\[
\ell' \cdot (x_j \mathcal{A}_{i-1}) = x_j (\ell \cdot \mathcal{A}_{i-1}),
\]
which contributes the block $\mathcal{M}^{i-1}_i$ in the $j$-th diagonal position of~\eqref{3.1}.  
	
On the other hand,
\[
	\ell' \cdot \mathcal{A}_i = x_1 \mathcal{A}_i + x_2 \mathcal{A}_i + \cdots + x_n \mathcal{A}_i + \ell \cdot \mathcal{A}_i,
	\]
	which yields the identity blocks $I_{h_i}$ in the last column of~\eqref{3.1}, together with the block $\mathcal{M}^i_{i+1}$ in the lower-right corner.  
	Hence, the matrix representing
	\[
	\cdot \ell' : [B]_i \longrightarrow [B]_{i+1}
	\]
	with respect to the pair of bases $(\mathcal{B}_i, \mathcal{B}_{i+1})$ has the block form~\eqref{3.1}.  
	
	Finally, since $\mathcal{B}_0 = \mathcal{A}_0$ and $\mathcal{B}_{D+1} = \bigsqcup_{j=1}^n x_j \mathcal{A}_D$, the same argument shows that the matrices representing
	\[
	\cdot \ell' : [B]_0 \longrightarrow [B]_1
	\quad\text{and}\quad
	\cdot \ell' : [B]_D \longrightarrow [B]_{D+1}
	\]
	are precisely of the block forms~\eqref{3.2} and~\eqref{3.3}, respectively.
\end{proof}

\begin{thm}\label{generalization of lollipop}
Assume that $\operatorname{char}(\Bbbk) = 0$.	For every $i \in \{1,2,\ldots,D-1\}$, the multiplication map
	\[
	\cdot \ell' : [B]_i \longrightarrow [B]_{i+1}
	\]
	is injective (respectively, surjective) if and only if the multiplication maps
	\[
	\cdot \ell : [A]_{i-1} \longrightarrow [A]_i 
	\quad\text{and}\quad 
	\cdot \ell^2 : [A]_{i-1} \longrightarrow [A]_{i+1}
	\]
	are both injective (respectively, surjective).
	
	In particular, the multiplication map $\cdot \ell' : [B]_0 \to [B]_1$ is always injective, whereas the map $\cdot \ell' : [B]_D \to [B]_{D+1}$ has maximal rank precisely when $\cdot \ell : [A]_{D-1} \to [A]_D$ is surjective.
\end{thm}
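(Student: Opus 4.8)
The plan is to read off the behaviour of $\cdot\ell'$ directly from the block description in Lemma~\ref{block matrix}, reducing everything to linear algebra involving the matrices $M:=\mathcal{M}^{i-1}_i$ (representing $\cdot\ell\colon[A]_{i-1}\to[A]_i$) and $N:=\mathcal{M}^{i}_{i+1}$ (representing $\cdot\ell\colon[A]_i\to[A]_{i+1}$), together with their product $NM=\mathcal{M}^{i}_{i+1}\mathcal{M}^{i-1}_i$, which is exactly the matrix of $\cdot\ell^2\colon[A]_{i-1}\to[A]_{i+1}$. Writing a vector of $[B]_i$ in the basis $\mathcal{B}_i$ as $(v_1,\dots,v_n,w)$, where $v_j$ collects the coefficients on $x_j\mathcal{A}_{i-1}$ and $w$ those on $\mathcal{A}_i$, formula~\eqref{3.1} shows that $\cdot\ell'$ sends it to $(Mv_1+w,\dots,Mv_n+w,\,Nw)$. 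All the work is then to decide when this explicit map is injective, respectively surjective.

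For injectivity I would analyse the kernel. If $(v_1,\dots,v_n,w)$ lies in it, then $Mv_j=-w$ for every $j$ and $Nw=0$, so substituting gives $NMv_j=0$; hence the kernel is trivial as soon as $NM$ is injective. Conversely, any $v$ with $NMv=0$ produces the nonzero kernel element $(v,\dots,v,-Mv)$, where only a single block is used. Thus $\cdot\ell'$ is injective iff $NM$ is injective, i.e.\ iff $\cdot\ell^2\colon[A]_{i-1}\to[A]_{i+1}$ is injective; since $\cdot\ell^2$ factors through $\cdot\ell\colon[A]_{i-1}\to[A]_i$, injectivity of the former automatically forces injectivity of the latter, so this is the same as requiring both maps to be injective.

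For surjectivity I would show that $\cdot\ell'$ is onto iff both $M$ and $N$ are onto. Sufficiency is immediate: if $M$ is surjective then $\operatorname{im}M=[A]_i$, so for any target $(u_1,\dots,u_n,z)$ one first picks $w$ with $Nw=z$ (possible since $N$ is onto) and then solves $Mv_j=u_j-w$. For necessity, the targets $(0,\dots,0,z)$ force $N$ onto, while comparing the first two coordinates of a preimage of $(u_1,0,\dots,0,0)$ yields $M(v_1-v_2)=u_1$, so $M$ is onto; this is the one place where having at least two diagonal blocks (that is, $n\ge 2$) is genuinely used. Finally, once $M$ is surjective one has $\operatorname{im}(NM)=N(\operatorname{im}M)=\operatorname{im}N$, so the condition ``$N$ onto'' may be replaced by ``$NM$ onto''; hence surjectivity of $\cdot\ell'$ is equivalent to surjectivity of both $\cdot\ell$ and $\cdot\ell^2$, as claimed. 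Note that, unlike the injective case, surjectivity of $\cdot\ell^2$ alone does not imply surjectivity of $\cdot\ell$, so the condition on $\cdot\ell$ is truly needed and is precisely what the repeated blocks detect.

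It remains to treat the two boundary degrees from the special shapes~\eqref{3.2} and~\eqref{3.3}. The matrix~\eqref{3.2} of $\cdot\ell'\colon[B]_0\to[B]_1$ is a single column whose entries coming from the $I_1$-blocks are nonzero, so the map is always injective. For $\cdot\ell'\colon[B]_D\to[B]_{D+1}$ there is no bottom $N$-row (since $[A]_{D+1}=0$), so the map is $(v_1,\dots,v_n,w)\mapsto(Mv_1+w,\dots,Mv_n+w)$ with $M=\mathcal{M}^{D-1}_D$; a direct count gives $\dim\operatorname{im}=h_D+(n-1)\operatorname{rank}M$, and since the kernel is always nontrivial, maximal rank reduces to surjectivity, which holds exactly when $\operatorname{rank}M=h_D$, i.e.\ when $\cdot\ell\colon[A]_{D-1}\to[A]_D$ is surjective. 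The main obstacle, and the only genuinely nontrivial point, is the surjectivity direction: keeping careful track of which block conditions translate into the ranks of $M$, $N$ and $NM$, and in particular recognising that the multiple copies of $M$ are exactly what make ``$\cdot\ell$ surjective'' a necessary condition.
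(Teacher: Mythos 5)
Your proof is correct and rests on the same foundation as the paper's: both take the block description of the matrix of $\cdot\ell'$ from Lemma~\ref{block matrix} and reduce everything to linear algebra in $M=\mathcal{M}^{i-1}_i$, $N=\mathcal{M}^{i}_{i+1}$ and $NM$. The difference is in execution. The paper performs block row and column operations to bring \eqref{3.1} to the form \eqref{3.7} and reads the criteria off the resulting rank, namely $h_i+(n-1)\operatorname{rank}M+\operatorname{rank}(NM)$, whereas you analyse the map $(v_1,\dots,v_n,w)\mapsto(Mv_1+w,\dots,Mv_n+w,Nw)$ directly, exhibiting explicit kernel elements $(v,\dots,v,-Mv)$ and solving for preimages. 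What your route buys is precision about a hypothesis the paper never states: your necessity argument for ``$M$ onto'' compares two coordinates of a preimage and therefore genuinely needs $n\ge 2$, and this restriction is real rather than an artifact of your method. For $n=1$ the surjectivity half of the theorem is actually false: take $A=\Bbbk[y_1,y_2]/(y_1^2,\,y_1y_2,\,y_2^3)$, so $D=2$, $\ell=y_1+y_2$, and $B=\Bbbk[x_1]/(x_1^2)\otimes_\Bbbk A$; then $\cdot\ell'\colon[B]_1\to[B]_2$ is bijective (its $3\times 3$ matrix in the bases of Lemma~\ref{block matrix} is invertible), while $\cdot\ell\colon[A]_0\to[A]_1$ is not surjective, since $\dim_\Bbbk[A]_1=2$. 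This is consistent with the rank count: when $n=1$ the $(n-1)$ copies of $M$ disappear and only $\operatorname{rank}(NM)$ matters, exactly as your argument predicts. The paper's proof glosses over this at the step where maximal rank of \eqref{3.7} is declared equivalent to both $\mathcal{M}^{i-1}_i$ and $\mathcal{M}^{i}_{i+1}\mathcal{M}^{i-1}_i$ having maximal rank (valid for injectivity for all $n$, but for surjectivity only when $n\ge 2$); the same caveat applies to the boundary claim at degree $D$. None of the paper's applications are affected, since they always use $n=m-1\ge 2$ or $n=m\ge 3$ variables in the quadratic factor, but your explicit tracking of where $n\ge 2$ enters is a genuine improvement. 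One minor wording slip: your kernel element $(v,\dots,v,-Mv)$ places $v$ in every block, so the phrase ``only a single block is used'' should be deleted.
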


\begin{proof}
The case $i = 0$ follows directly from Lemma~\ref{block matrix}.  
Indeed, by~\eqref{3.2}, the matrix of
\[
\cdot \ell' : [B]_0 \longrightarrow [B]_1
\]
is a nonzero column vector, and hence the map is injective.  
	
Now fix $i \in \{1, 2, \ldots, D-1\}$. 
By Lemma~\ref{block matrix}, the matrix of the multiplication map
\[
\cdot \ell' : [B]_i \longrightarrow [B]_{i+1}
\]
has the block form~\eqref{3.1}. 
By performing row operations, the rank of matrix~\eqref{3.1} is equal to the rank of the following matrix
\begin{equation}
	\begin{bNiceMatrix}[margin]
		\Block[borders={bottom,right}]{4-4}{} 
		\mathcal{M}^{i-1}_i & 0_{h_i,h_{i-1}} & \cdots & 0_{h_i,h_{i-1}} & I_{h_i}\\
		-\mathcal{M}^{i-1}_i & \mathcal{M}^{i-1}_i & \cdots & 0_{h_i,h_{i-1}} & 0_{h_i,h_i}\\
		\vdots & \vdots & \ddots & \vdots & \vdots\\
		-\mathcal{M}^{i-1}_i & 0_{h_i,h_{i-1}} & \cdots & \mathcal{M}^{i-1}_i & 0_{h_i,h_i}\\
		0_{h_{i+1},h_{i-1}} & 0_{h_{i+1},h_{i-1}} & \cdots & 0_{h_{i+1},h_{i-1}} & 
		\Block[borders={top,left}]{1-1}{}\mathcal{M}^i_{i+1} \label{3.4}
	\end{bNiceMatrix}.
\end{equation}

By column operations, the rank of matrix~\eqref{3.4} is equal to the rank of the following matrix
\begin{equation}
	\begin{bNiceMatrix}[margin]
		\Block[borders={bottom,right}]{4-4}{} 
		\mathcal{M}^{i-1}_i & 0_{h_i,h_{i-1}} & \cdots & 0_{h_i,h_{i-1}} & I_{h_i}\\
		0_{h_i,h_{i-1}} & \mathcal{M}^{i-1}_i & \cdots & 0_{h_i,h_{i-1}} & 0_{h_i,h_i}\\
		\vdots & \vdots & \ddots & \vdots & \vdots\\
		0_{h_i,h_{i-1}} & 0_{h_i,h_{i-1}} & \cdots & \mathcal{M}^{i-1}_i & 0_{h_i,h_i}\\
		0_{h_{i+1},h_{i-1}} & 0_{h_{i+1},h_{i-1}} & \cdots & 0_{h_{i+1},h_{i-1}} & 
		\Block[borders={top,left}]{1-1}{}\mathcal{M}^i_{i+1} \label{3.5}
	\end{bNiceMatrix}.
\end{equation}

Applying further column operations, the rank of matrix~\eqref{3.5} is equal to the rank of the following matrix
\begin{equation}
	\begin{bNiceMatrix}[margin]
		\Block[borders={bottom,right}]{4-4}{} 
		0_{h_i,h_{i-1}} & 0_{h_i,h_{i-1}} & \cdots & 0_{h_i,h_{i-1}} & I_{h_i}\\
		0_{h_i,h_{i-1}} & \mathcal{M}^{i-1}_i & \cdots & 0_{h_i,h_{i-1}} & 0_{h_i,h_i}\\
		\vdots & \vdots & \ddots & \vdots & \vdots\\
		0_{h_i,h_{i-1}} & 0_{h_i,h_{i-1}} & \cdots & \mathcal{M}^{i-1}_i & 0_{h_i,h_i}\\
		-\mathcal{M}^i_{i+1}\mathcal{M}^{i-1}_i & 0_{h_{i+1},h_{i-1}} & \cdots & 0_{h_{i+1},h_{i-1}} & 
		\Block[borders={top,left}]{1-1}{}\mathcal{M}^i_{i+1} \label{3.6}
	\end{bNiceMatrix}.
\end{equation}

Finally, by row operations, the rank of matrix~\eqref{3.6} is equal to the rank of the following matrix
\begin{equation}
	\begin{bNiceMatrix}[margin]
		\Block[borders={bottom,right}]{4-4}{} 
		0_{h_i,h_{i-1}} & 0_{h_i,h_{i-1}} & \cdots & 0_{h_i,h_{i-1}} & I_{h_i}\\
		0_{h_i,h_{i-1}} & 0_{h_i,h_{i-1}} & \cdots & \mathcal{M}^{i-1}_i & 0_{h_i,h_i}\\
		\vdots & \vdots & \ddots & \vdots & \vdots\\
		0_{h_i,h_{i-1}} & \mathcal{M}^{i-1}_i & \cdots & 0_{h_i,h_i} & 0_{h_i,h_i}\\
		\mathcal{M}^i_{i+1}\mathcal{M}^{i-1}_i & 0_{h_{i+1},h_{i-1}} & \cdots & 0_{h_{i+1},h_{i-1}} & 
		\Block[borders={top,left}]{1-1}{}0_{h_{i+1},h_i} \label{3.7}
	\end{bNiceMatrix}.
\end{equation}

Note that the matrix $\mathcal{M}^i_{i+1}\mathcal{M}^{i-1}_i$ represents the multiplication map 
\[
\cdot \ell^2 : [A]_{i-1} \longrightarrow [A]_{i+1}.
\]
Hence, the map $\cdot \ell' : [B]_i \longrightarrow [B]_{i+1}$ is injective if and only if matrix~\eqref{3.7} has maximal column rank. 
Equivalently, both matrices $\mathcal{M}^{i-1}_i$ and $\mathcal{M}^i_{i+1}\mathcal{M}^{i-1}_i$ have maximal column rank. 
In other words, the maps 
\[
\cdot \ell : [A]_{i-1} \longrightarrow [A]_i 
\quad \text{and} \quad 
\cdot \ell^2 : [A]_{i-1} \longrightarrow [A]_{i+1}
\]
are injective. 
The case of surjectivity follows in a similar manner.

Finally, for $i = D$, the matrix of 
	\(\cdot \ell' : [B]_D \to [B]_{D+1}\) 
	has the form~\eqref{3.3}.  By performing similar row and column operations as above, we deduce that the rank of matrix~\eqref{3.3} is equal to the rank of the following matrix
	\begin{equation}
		\begin{bNiceMatrix}
			\Block[borders={right}]{4-4}{}0_{h_D,h_{D-1}} & 0_{h_D,h_{D-1}} & \cdots & 0_{h_D,h_{D-1}} & I_{h_D}\\
			0_{h_D,h_{D-1}} & 0_{h_D,h_{D-1}} & \cdots & \mathcal{M}^{D-1}_D & 0_{h_D,h_D}\\
			\vdots & \vdots & \ddots & \vdots & \vdots\\
			0_{h_D,h_{D-1}} & \mathcal{M}^{D-1}_D & \cdots & 0_{h_D,h_{D-1}} & 0_{h_D,h_D}
		\end{bNiceMatrix}.
		\label{3.8}
	\end{equation}
	Therefore, the multiplication map
$
\cdot \ell' : [B]_D \longrightarrow [B]_{D+1}
$
	has maximal rank if and only if the map
$
\cdot \ell : [A]_{D-1} \longrightarrow [A]_D
$
	is surjective.
	Thus, the proof is complete.
\end{proof}

\section{The independence polynomial of lollipop graphs}

In this section, we study the independence polynomials of lollipop graphs.  
These results will be useful to prove \Cref{SecondTheorem} in the next section.

Recall that the \emph{lollipop graph} $L_{m,n}$ is formed by attaching a complete graph $K_m$ to a path $P_n$ by a bridge, where $m,n \ge 1$ (see Figure~\ref{lollipop Lm,n}).
\begin{figure}[!h]
	\begin{tikzpicture}[
		every edge/.style = {draw=black,very thick},
		vrtx/.style args = {#1/#2}{
			circle, draw, thick, fill=black,
			minimum size=1mm, label=#1:#2}
		]
		\node (n1) [vrtx=left/$x_1$]at (0, 0) {};
		\node (n3) [vrtx=below/$x_m$]at (2,-1.5)  {};
		\node (n4) [vrtx=below/$x_{m-1}$]at (1,-3) {};
		\node (n5) [vrtx=below/]at (-1,-3)  {};
		\node (n6) [vrtx=left/$x_2$]at (-2,-1.5)  {};
		\node (n7) [vrtx=below/$y_1$]at (3.5,-1.5)  {};
		\node (n8) [vrtx=below/$y_2$]at (5,-1.5)  {};
		\node (n9) [vrtx=below/]at (6.5,-1.5)  {};
		\node (n10) [vrtx=below/]at (8,-1.5)  {};
		\node (n11) [vrtx=below/$y_{n-1}$]at (9.5,-1.5)  {};
		\node (n12) [vrtx=below/$y_n$]at (11,-1.5)  {};
		\foreach \from/\to in {n1/n3,n1/n4,n1/n5,n1/n6,n3/n4,n3/n5,n3/n6,n4/n5,n4/n6, n5/n6, n3/n7, n8/n7, n8/n9, n9/n10, n10/n11, n11/n12}		
		\draw (\from) -- (\to);	
	\end{tikzpicture}
	\caption{Lollipop graph $L_{m,n}$}
	\label{lollipop Lm,n}
\end{figure}
As lollipop graphs are claw-free, their independence polynomials are log-concave, and consequently unimodal~\cite{Hamidoune90}.  
Denote by $\eta_{m,n}$ the mode of the independence polynomial $I(L_{m,n};t)$, whose value we aim to estimate.
For a polynomial $f(x) = \sum_{i=0}^n a_i x^i$, we set $a_k = 0$ for all $k > n$ or $k < 0$.

\begin{lem}\label{unimodal_mode_unit}
Let $f$ and $g$ be unimodal polynomials with nonnegative real coefficients, whose modes are $\mu$ and $\nu$, respectively. 
If $\abs{\mu - \nu} \le 1$, then $f + g$ is also unimodal, and its mode belongs to $\{\min(\mu, \nu),\, \min(\mu, \nu) + 1\}$.
\end{lem}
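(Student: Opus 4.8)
The plan is to reformulate the notion of mode in a form suited to adding sequences, then add $f$ and $g$ coefficientwise and track monotonicity, isolating the single junction where unimodality could conceivably fail. Write $f=\sum_k a_k t^k$ and $g=\sum_k b_k t^k$, and set $c_k=a_k+b_k$ so that $f+g=\sum_k c_k t^k$. The opening step is to record a structural description of a unimodal polynomial in terms of its mode: if $f$ is unimodal with mode $\mu$, then
\[
a_0\le a_1\le\cdots\le a_{\mu-1}<a_\mu\ge a_{\mu+1}\ge\cdots .
\]
The non-increasing tail $a_\mu\ge a_{\mu+1}\ge\cdots$ and the strict step $a_{\mu-1}<a_\mu$ are immediate from \Cref{mode}. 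For the non-decreasing head one takes a unimodality witness $m$ (so $a_0\le\cdots\le a_m\ge\cdots$), notes that $a_m$ is the global maximum, and checks that the least index attaining this maximum satisfies the defining condition of the mode; by uniqueness that index equals $\mu$, and since $\mu\le m$ the head $a_0\le\cdots\le a_{\mu-1}<a_\mu$ follows. I would prove this first, as everything else is a formal consequence of it.

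By symmetry I would assume $\mu\le\nu$, so that $\min(\mu,\nu)=\mu$ and, since $\abs{\mu-\nu}\le 1$, either $\nu=\mu$ or $\nu=\mu+1$. In the case $\nu=\mu$ the heads and tails add up monotonically: for $k<\mu$ both $a_k\le a_{k+1}$ and $b_k\le b_{k+1}$ give $c_k\le c_{k+1}$; the strict steps $a_{\mu-1}<a_\mu$ and $b_{\mu-1}<b_\mu$ give $c_{\mu-1}<c_\mu$; and for $k\ge\mu$ both tails are non-increasing, so $c_k\ge c_{k+1}$. Hence $f+g$ is unimodal with mode exactly $\mu=\min(\mu,\nu)$.

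The case $\nu=\mu+1$ is the heart of the argument. Here, for $k\le\mu-1$ one still has $a_k\le a_{k+1}$ and $b_k\le b_{k+1}$, hence $c_k\le c_{k+1}$, and the step $c_{\mu-1}<c_\mu$ persists because $a_{\mu-1}<a_\mu$ is strict while $b_{\mu-1}\le b_\mu$. For $k\ge\mu+1=\nu$ both tails are non-increasing, so $c_k\ge c_{k+1}$. The only comparison that is not forced is between $c_\mu$ and $c_{\mu+1}$: there $a_\mu\ge a_{\mu+1}$ (the tail of $f$) pulls one way while $b_\mu<b_{\mu+1}$ (the strict step into the mode of $g$) pulls the other. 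I would therefore split on the sign of $c_\mu-c_{\mu+1}$. If $c_\mu\ge c_{\mu+1}$, the sequence reads $c_0\le\cdots\le c_{\mu-1}<c_\mu\ge c_{\mu+1}\ge\cdots$, so $f+g$ is unimodal with mode $\mu$; if instead $c_\mu<c_{\mu+1}$, it reads $c_0\le\cdots<c_\mu<c_{\mu+1}\ge c_{\mu+2}\ge\cdots$, so the mode is $\mu+1$. Either way the mode lies in $\{\mu,\mu+1\}=\{\min(\mu,\nu),\min(\mu,\nu)+1\}$, as claimed.

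The substantive content is entirely in the opening structural step and in recognizing that, once $\abs{\mu-\nu}\le 1$, the sole index at which coefficientwise addition can disturb monotonicity is the single junction between the two modes; the remaining bookkeeping of inequalities is routine. The main point to handle carefully is that the ``head is non-decreasing'' half of the structural description genuinely uses unimodality and is not part of \Cref{mode}, and that in the case $\nu=\mu+1$ one must invoke strictness on the correct side --- the strict step $a_{\mu-1}<a_\mu$ at the left junction and $b_\mu<b_{\mu+1}$ at the right --- in order to pin down the mode rather than merely unimodality.
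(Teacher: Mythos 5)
Your proof is correct and takes essentially the same route as the paper's: reduce by symmetry to $\mu\le\nu$, treat $\nu=\mu$ and $\nu=\mu+1$ by adding the monotone heads and tails coefficientwise, and split on the sign of $c_\mu-c_{\mu+1}$ to pin down the mode. The only difference is that you explicitly prove the structural description of a unimodal polynomial around its mode (the non-decreasing head up to $\mu$), which the paper compresses into ``it is straightforward to verify.''
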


\begin{proof}
Without loss of generality, assume that $\mu \le \nu$. 
	Write $f = \sum_{k=0}^n a_k x^k$ and $g = \sum_{k=0}^m b_k x^k$. 
	If $\mu = \nu$, the claim is immediate. 
	
	Suppose now that $\nu = \mu + 1$. 
	Then it is straightforward to verify that
	\begin{align*}
		&a_{i-1} + b_{i-1} \le a_i + b_i, \quad &&\text{for all } i = 1, \ldots, \mu - 1,\\
		&a_i + b_i \ge a_{i+1} + b_{i+1}, \quad &&\text{for all } i = \mu + 1, \ldots, \max(m,n),\\
		&a_{\mu - 1} + b_{\mu - 1} < a_\mu + b_\mu.
	\end{align*}
	If $a_\mu + b_\mu \ge a_{\mu + 1} + b_{\mu + 1}$, then $f + g$ is unimodal with mode $\mu$; 
	otherwise, if $a_\mu + b_\mu < a_{\mu + 1} + b_{\mu + 1}$, its mode is $\mu + 1$.
\end{proof}

\begin{prop}\label{mode of lollipop}
For all $m, n \ge 1$, the mode $\eta_{m,n}$ of the independence polynomial of the lollipop graph $L_{m,n}$ belongs to $\{\lambda_n, \lambda_n + 1\}$.
\end{prop}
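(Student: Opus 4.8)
The plan is to reduce the computation of $\eta_{m,n}$ to known facts about $\lambda_n$ and $\lambda_{n+1}$ by first writing $I(L_{m,n};t)$ as a sum of two unimodal pieces whose modes we can control. I would apply Proposition~\ref{FormulaforIndependence}(a) at the vertex $x_m$ of $K_m$ that carries the bridge. Since $L_{m,n}\setminus x_m$ is the disjoint union $K_{m-1}\sqcup P_n$ while $L_{m,n}\setminus N[x_m]=P_{n-1}$ (the path on $y_2,\dots,y_n$), Proposition~\ref{FormulaforIndependence}(a),(b) together with $I(K_{m-1};t)=1+(m-1)t$ give
\[
I(L_{m,n};t)=\bigl(1+(m-1)t\bigr)I(P_n;t)+t\,I(P_{n-1};t).
\]
Invoking the endpoint recurrence $I(P_{n+1};t)=I(P_n;t)+t\,I(P_{n-1};t)$ (Proposition~\ref{FormulaforIndependence}(a) applied to an end of $P_{n+1}$), this simplifies to the clean form
\[
I(L_{m,n};t)=I(P_{n+1};t)+(m-1)t\,I(P_n;t).
\]

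Next I would record the modes of the two summands. The polynomial $I(P_{n+1};t)$ is unimodal with mode $\lambda_{n+1}$, and for $m\ge 2$ the polynomial $(m-1)t\,I(P_n;t)$ is unimodal with mode $\lambda_n+1$, since multiplication by $t$ shifts the mode up by one and positive scaling leaves it unchanged. By Lemma~\ref{lambda_n < lambda_n+1} we have $\lambda_n\le\lambda_{n+1}\le\lambda_n+1$, so the two modes differ by at most one and both lie in $\{\lambda_n,\lambda_n+1\}$. For $m=1$ the second summand vanishes, $I(L_{1,n};t)=I(P_{n+1};t)$, and its mode $\lambda_{n+1}$ already lies in $\{\lambda_n,\lambda_n+1\}$, settling that case.

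For $m\ge 2$ I would combine the two pieces via Lemma~\ref{unimodal_mode_unit}, which yields that $I(L_{m,n};t)$ is unimodal with mode in $\{\min(\lambda_{n+1},\lambda_n+1),\ \min(\lambda_{n+1},\lambda_n+1)+1\}$. The main obstacle is that this bound is not sharp when $\lambda_{n+1}=\lambda_n+1$: then both summands share the mode $\lambda_n+1$, and the generic conclusion of Lemma~\ref{unimodal_mode_unit} would permit the spurious value $\lambda_n+2$. To rule it out I would use the sharper elementary fact that the sum of two unimodal sequences with a common mode $\mu$ is again unimodal with mode exactly $\mu$, which here forces $\eta_{m,n}=\lambda_n+1$. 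When instead $\lambda_{n+1}=\lambda_n$, the modes are $\lambda_n$ and $\lambda_n+1$ and Lemma~\ref{unimodal_mode_unit} directly gives mode in $\{\lambda_n,\lambda_n+1\}$. In every case $\eta_{m,n}\in\{\lambda_n,\lambda_n+1\}$.

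Alternatively, and more transparently, I would argue directly on coefficients, which sidesteps the equal-mode subtlety altogether. Writing $p_k=s_k(P_{n+1})$ and $q_k=s_k(P_n)$, the $k$-th coefficient of $I(L_{m,n};t)$ is $a_k=p_k+(m-1)q_{k-1}$. Using $\lambda_n\le\lambda_{n+1}\le\lambda_n+1$ together with unimodality of $(p_k)$ and $(q_k)$, one checks that $a_{k-1}\le a_k$ for every $k\le\lambda_n$ (because then $p_{k-1}\le p_k$ and $q_{k-2}\le q_{k-1}$) and $a_k\ge a_{k+1}$ for every $k\ge\lambda_n+1$ (because then $p_k\ge p_{k+1}$ and $q_{k-1}\ge q_k$). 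Since $I(L_{m,n};t)$ is unimodal, the lollipop being claw-free, its mode is well defined and these two monotonicity ranges pin it to $\{\lambda_n,\lambda_n+1\}$, completing the argument.
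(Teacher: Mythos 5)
Your primary argument is correct, but it takes a genuinely different route from the paper. The paper fixes $n$ and inducts on $m$: deleting a vertex of the complete graph \emph{not} adjacent to the path gives $I(L_{m+1,n};t)=I(L_{m,n};t)+t\,I(P_n;t)$, and Lemma~\ref{unimodal_mode_unit} is applied at every inductive step. You instead delete the bridge vertex $x_m$ once, obtaining the closed form $I(L_{m,n};t)=I(P_{n+1};t)+(m-1)t\,I(P_n;t)$ (which is what the paper's recurrence yields after summing over $m$), so a single application of the mode-of-a-sum lemma suffices and no induction is needed. A notable merit of your write-up is that you explicitly flag the case $\lambda_{n+1}=\lambda_n+1$, where both summands have mode $\lambda_n+1$ and the literal statement of Lemma~\ref{unimodal_mode_unit} only gives mode in $\{\lambda_n+1,\lambda_n+2\}$; you patch this with the sharper fact that two unimodal polynomials with a common mode $\mu$ sum to a unimodal polynomial with mode exactly $\mu$. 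The paper's induction faces exactly the same equal-mode configuration when $\eta_{m,n}=\lambda_n+1$ and silently relies on that same sharper fact (it is the ``immediate'' case inside the proof of Lemma~\ref{unimodal_mode_unit}, made explicit only in Remark~\ref{constant mode}), so your version is, if anything, more careful on this point.

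One caveat about your ``alternative'' coefficient argument: establishing $a_{k-1}\le a_k$ for $k\le\lambda_n$ and $a_k\ge a_{k+1}$ for $k\ge\lambda_n+1$ does not by itself pin the mode to $\{\lambda_n,\lambda_n+1\}$, because the mode in Definition~\ref{mode} is the \emph{first} index attaining the maximum: a plateau $a_j=\cdots=a_{\lambda_n}=\max$ with $j<\lambda_n$ would place the mode at $j$. To close this you need a strict inequality near the top, which does hold but requires a short case analysis: if $\lambda_{n+1}=\lambda_n$, then $p_{\lambda_n-1}<p_{\lambda_n}$ forces $a_{\lambda_n-1}<a_{\lambda_n}$; if $\lambda_{n+1}=\lambda_n+1$, then $p_{\lambda_n}<p_{\lambda_n+1}$ and $q_{\lambda_n-1}<q_{\lambda_n}$ force $a_{\lambda_n}<a_{\lambda_n+1}$. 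This is a minor repair, and since your first argument is complete as it stands, the proposal as a whole is sound.
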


\begin{proof}
	We fix $n$ and proceed by induction on $m$. 
	For $m = 1$, the lollipop graph $L_{1,n}$ is the path graph $P_{n+1}$. 
	Hence, the mode of $I(L_{1,n}; t)$ is $\lambda_{n+1}$. 
	By Lemma~\ref{lambda_n < lambda_n+1}, we have $\lambda_{n+1} \in \{\lambda_n, \lambda_n + 1\}$, and thus the base case holds.
	
	Assume the statement holds for some $m$, and consider $m + 1$. 
	By Proposition~\ref{FormulaforIndependence}, removing a vertex from the complete graph that is not joined to the path gives
	\[
	I(L_{m+1,n}; t) = I(L_{m,n}; t) + t\,I(P_n; t).
	\]
	By the inductive hypothesis, the mode of $I(L_{m,n}; t)$ belongs to $\{\lambda_n, \lambda_n + 1\}$, 
	while the mode of $t I(P_n; t)$ equals $\lambda_n + 1$. 
Hence, by Lemma~\ref{unimodal_mode_unit}, the mode of $I(L_{m+1,n}; t)$ belongs to $\{\lambda_n, \lambda_n + 1\}$, as desired.
\end{proof}

\begin{rem}\label{constant mode}
	From the proof of Proposition~\ref{mode of lollipop}, one can also deduce that 
	if there exists an integer $m_0$ such that the mode of $I(L_{m_0,n}; t)$ equals $\lambda_n + 1$, 
	then $\eta_{m,n} = \lambda_n + 1$ for all $m \ge m_0$. 
	Indeed, we have
	\[
	I(L_{m+1,n}; t) = I(L_{m,n}; t) + t\,I(P_n; t).
	\]
	Since the mode of $t I(P_n; t)$ equals $\lambda_n + 1$, and hence if $\eta_{m,n} = \lambda_n + 1$, then $\eta_{m+1,n} = \lambda_n + 1$ as well. 
	The conclusion follows by induction.
\end{rem}

\section{WLP of Artinian algebras associated to lollipop graphs}
We now study the weak Lefschetz property of the Artinian algebra associated to the lollipop graphs $L_{m,n}$, labeled as in Figure~\ref{lollipop Lm,n}.  
Let $R = \Bbbk[x_1, \ldots, x_m, y_1, \ldots, y_n]$ be a standard graded polynomial ring, and define
\[
A(L_{m,n}) = 
\frac{R}{(x_1^2, \ldots, x_m^2, y_1^2, \ldots, y_n^2) + I(L_{m,n})},
\]
where $I(L_{m,n})$ denotes the edge ideal of the lollipop graph $L_{m,n}$.  
Throughout this section, we assume that $\operatorname{char}(\Bbbk) = 0$, and denote by $\ell$ the sum of all variables in the polynomial ring.  
Our main result in this paper is stated as follows.

\begin{thm}\label{WLP for lollipop}
	The algebra $A(L_{m,n})$ has the WLP if and only if one of the following conditions holds:
	\begin{enumerate}[\quad \rm (i)]
		\item $m = 1$ and $n \in \{1,2,\ldots,6,8,9,12\}$;
		\item $m = 2$ and $n \in \{1,2,\ldots,5,7,8,11\}$;
		\item $m \ge 3$ and $n \in \{1,3,4,7\}$.
	\end{enumerate}
\end{thm}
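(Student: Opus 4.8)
The plan is to separate the three ranges of $m$. The cases $m\in\{1,2\}$ are not genuinely new: the graph $L_{1,n}$ is the path $P_{n+1}$ (the lone clique vertex and the bridge merely lengthen the path by one), and $L_{2,n}$ is the path $P_{n+2}$ (the single clique edge, the bridge, and $P_n$ concatenate). Hence $A(L_{1,n})\cong A(P_{n+1})$ and $A(L_{2,n})\cong A(P_{n+2})$, and (i)--(ii) follow by feeding $N=n+1$ and $N=n+2$ into the path classification $N\in\{1,\dots,7,9,10,13\}$ of Theorem~\ref{WLP for P_n}. Everything below concerns $m\ge 3$. Throughout I use that $\ell$ is the sum of all variables (Proposition~\ref{Proposition2.5}), that $I(L_{m,n};t)$ is unimodal with mode $\eta_{m,n}\in\{\lambda_n,\lambda_n+1\}$ (Proposition~\ref{mode of lollipop}), and that by unimodality the WLP is equivalent to the pair of one-sided statements: $\cdot\ell$ is injective in every degree $<\eta_{m,n}$ and surjective in every degree $\ge\eta_{m,n}$.

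For the positive direction I would fix $n\in\{1,3,4,7\}$ and induct on $m$ using the deletion of a clique vertex $x_1$ not incident to the bridge. Since $L_{m,n}\setminus x_1=L_{m-1,n}$ and $L_{m,n}\setminus N[x_1]=P_n$ (Proposition~\ref{FormulaforIndependence}), this yields the graded short exact sequence
\[
0\longrightarrow A(P_n)(-1)\xrightarrow{\ \cdot x_1\ }A(L_{m,n})\longrightarrow A(L_{m-1,n})\longrightarrow 0,
\]
on which $\cdot\ell$ restricts to $\cdot\ell_P$ on the submodule $A(P_n)(-1)$, where $\ell_P=y_1+\cdots+y_n$, because $x_1x_j=0$ for every clique variable $x_j$. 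Applying the snake lemma to the degree-$i$ and degree-$(i+1)$ rows gives a six-term sequence relating $\ker$ and $\operatorname{coker}$ of $\cdot\ell$ on $A(L_{m,n})$ to those on $A(L_{m-1,n})$ and to $\cdot\ell_P$ on $A(P_n)$. Injectivity is inherited in degrees strictly below the mode and surjectivity in degrees strictly above it, directly from the corresponding one-sided properties of $A(L_{m-1,n})$ and of $A(P_n)$ (which has the WLP for $n\in\{1,3,4,7\}$ by Theorem~\ref{WLP for P_n}); the single transition degree $\eta_{m,n}$ is the delicate point, and the base case $m=2$ is $P_{n+2}$, already covered.

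For the failure direction ($m\ge 3$, $n\notin\{1,3,4,7\}$) I would instead delete the bridge clique vertex $x_m$. Here $L_{m,n}\setminus x_m=K_{m-1}\cup P_n$ and $L_{m,n}\setminus N[x_m]=P_{n-1}$, so
\[
0\longrightarrow A(P_{n-1})(-1)\xrightarrow{\ \cdot x_m\ }A(L_{m,n})\longrightarrow A(K_{m-1}\cup P_n)\longrightarrow 0,
\]
where the quotient is the tensor product $A(K_{m-1})\otimes_\kk A(P_n)$ to which Theorem~\ref{generalization of lollipop} applies verbatim. The snake lemma now transfers failures in the convenient direction: the cokernel surjection $\operatorname{coker}(\cdot\ell)^{A(L_{m,n})}\twoheadrightarrow\operatorname{coker}(\cdot\ell)^{A(K_{m-1}\cup P_n)}$ shows that any \emph{surjectivity} failure of the tensor product descends to $A(L_{m,n})$, while the kernel injection $\ker(\cdot\ell_{P_{n-1}})\hookrightarrow\ker(\cdot\ell)^{A(L_{m,n})}$ shows that any \emph{injectivity} failure of the path submodule $A(P_{n-1})$ does too. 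By Theorem~\ref{generalization of lollipop} the former is detected by a surjectivity failure of $\cdot\ell_P$ or $\cdot\ell_P^{\,2}$ on $A(P_n)$, and the path classification Theorem~\ref{WLP for P_n}(i)--(iii) supplies both kinds of failure at explicit degrees for the relevant $n$; the small values $n\in\{2,5,6\}$ are treated by direct computation. Finally Remark~\ref{constant mode} propagates a mode equal to $\lambda_n+1$---and hence the associated rank failure---to all larger $m$, so that it suffices to exhibit one failing $m$ for each such $n$.

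The main obstacle, I expect, is twofold. First, Theorem~\ref{generalization of lollipop} demands the strong-Lefschetz-type datum $\cdot\ell_P^{\,2}:[A(P_n)]_{i-1}\to[A(P_n)]_{i+1}$, which is \emph{not} supplied by the plain WLP classification of Theorem~\ref{WLP for P_n}; for those $n$ where $\cdot\ell_P$ alone does not already force a failure, this degree-two information must be extracted separately. Second, in both directions the genuinely delicate degree is the mode itself, where the conclusion hinges on the connecting homomorphism in the snake lemma: one must verify by a careful dimension count at $\eta_{m,n}$ that the failure is not absorbed (failure direction) and that maximal rank is not destroyed (success direction). It is precisely at this transition degree that the finitely many borderline values of $n$ have to be pinned down, presumably with the aid of {\tt Macaulay2}.
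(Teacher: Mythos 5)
Your treatment of $m\in\{1,2\}$, your surjectivity-failure argument (delete the bridge vertex $x_m$, push cokernels onto the quotient $A(K_{m-1})\otimes_\kk A(P_n)$, and invoke Theorem~\ref{generalization of lollipop} together with Theorem~\ref{WLP for P_n}), and your inductive positive direction for $n\in\{1,3,4,7\}$ all coincide with the paper's proof (Proposition~\ref{8,11,14,15} and Proposition~\ref{1,3,4,7}). Two caveats on the positive direction: the induction needs the fact that $\eta_{m,n}=\lambda_n+1$ for \emph{all} $m\ge 3$ and $n\in\{1,3,4,7\}$ --- otherwise ``injective below the mode, surjective at and above it'' is not a hypothesis you can pass from $m-1$ to $m$ --- and this is exactly what the paper supplies by computing $HS(A(L_{3,n}),t)$ and applying Remark~\ref{constant mode}; once you have it, your ``delicate transition degree'' is not delicate at all, since surjectivity at $i=\lambda_n+1$ only needs $A(P_n)$ surjective at $\lambda_n$ (its own mode) and $A(L_{m-1,n})$ surjective at its own mode.

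The genuine gap is in the failure direction for $n\in\{2,5,6,9,10,12,13,16\}$. Your only transfer mechanisms are (a) the cokernel surjection onto the quotient $A(K_{m-1})\otimes_\kk A(P_n)$ and (b) the kernel injection from the sub $A(P_{n-1})(-1)$, and both are empty for, e.g., $n=10$ (also $n=2,6$): since $A(P_{10})$ has the WLP with mode $3$, Theorem~\ref{generalization of lollipop} makes the tensor product surjective in every degree $\ge \lambda_{10}+1=\eta_{m,10}$, so (a) detects nothing at or above the mode of $A(L_{m,10})$; and $A(P_9)$ has the WLP with Hilbert function $1,9,28,35,15,1$ strictly increasing below its mode, so it is injective in all degrees $\le 2$ and (b) produces no kernel in any degree $<\eta_{m,10}=4$. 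The structural reason you cannot repair this within your setup is that injectivity failures of a \emph{quotient} never descend to the middle term of a short exact sequence (the snake map sends that kernel into the cokernel of the sub), so vertex deletion, which always puts $A(L_{m,n})$ in the middle, can never import the needed non-injectivity. The paper's missing idea (Proposition~\ref{2,5,6,9,...}) is a second exact sequence in which $A(L_{m,n})$ is the \emph{quotient}, obtained by dividing the tensor product by the missing bridge edge:
\[
0\longrightarrow A(P_{n-2})(-2)\xrightarrow{\ \cdot x_m y_1\ } \frac{\kk[x_1,\ldots,x_m]}{(x_1,\ldots,x_m)^2}\otimes_\kk A(P_n)\longrightarrow A(L_{m,n})\longrightarrow 0 .
\]
Here non-injectivity of $\cdot\ell^2:[A(P_n)]_{\lambda_n-1}\to[A(P_n)]_{\lambda_n+1}$ --- a pure dimension count, e.g.\ $36>35$ for $n=10$, which is precisely the degree-two datum you flagged as missing --- yields, via Theorem~\ref{generalization of lollipop}, non-injectivity of the middle term at $\lambda_n\to\lambda_n+1$, and this pushes forward to the quotient $A(L_{m,n})$ because the sub $A(P_{n-2})$ is injective in the relevant degree. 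Finally, your closing reduction (``exhibit one failing $m$ and propagate via Remark~\ref{constant mode}'') is unjustified: that remark propagates the mode of the independence polynomial, not any failure of maximal rank, so ``direct computation'' for $n\in\{2,5,6\}$ cannot cover infinitely many $m$; the paper's argument is instead uniform in $m$, with the mode dichotomy $\eta_{m,n}\in\{\lambda_n,\lambda_n+1\}$ handled by Lemma~\ref{eta_m,n = lambda_n} in one branch and the sequence above in the other.
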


To prove this theorem, we need several auxiliary results.  
By Proposition~\ref{mode of lollipop}, the mode $\eta_{m,n}$ of the independence polynomial $I(L_{m,n};t)$ satisfies $\eta_{m,n} \in \{\lambda_n, \lambda_n + 1\}$.  
We first show that $A(L_{m,n})$ fails to have the WLP whenever $\eta_{m,n} = \lambda_n$.

\begin{lem}\label{eta_m,n = lambda_n}
	Let $m \ge 3$ and $n \ge 1$ be integers such that $\eta_{m,n} = \lambda_n$. 
	Then the multiplication map
	\[
	\cdot \ell : [A(L_{m,n})]_{\eta_{m,n}} \longrightarrow [A(L_{m,n})]_{\eta_{m,n}+1}
	\]
	is not surjective.
\end{lem}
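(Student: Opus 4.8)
The plan is to describe $A(L_{m,n})$ explicitly as a graded $\kk$-vector space, write the matrix of $\cdot\ell$ in a block form adapted to the clique $K_m$, and then extract a single necessary condition for surjectivity that is forced to fail at the mode $\lambda_n$. Since $x_1,\dots,x_m$ span a clique, every squarefree monomial basis element of $A(L_{m,n})$ involves at most one $x_i$. Writing $A:=A(P_n)$ for the path subalgebra on $y_1,\dots,y_n$, $\ell_y:=y_1+\cdots+y_n$, and $A':=A(P_{n-1})=A/(y_1)$ for the path on $y_2,\dots,y_n$, I would first record the decomposition
\[
A(L_{m,n}) \;=\; A \;\oplus\; \bigoplus_{i=1}^{m-1} x_i A \;\oplus\; x_m A',
\]
where the single bridge relation $x_m y_1=0$ is exactly what replaces $A$ by $A'$ in the last summand. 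Writing $M_d\colon[A]_d\to[A]_{d+1}$ for the path map $\cdot\ell_y$, I would then check how $\ell$ acts: on the copy $A$ it sends $v_0\mapsto M_d v_0$ together with $v_0\mapsto x_iv_0$ into each free copy and $v_0\mapsto x_m\overline{v_0}$ into $x_mA'$; on each free copy $x_iA$ ($i<m$) it acts by $x_i(\cdot\ell_y)$ since $(\sum_k x_k)x_i=0$; and on $x_mA'$ by $x_m(\cdot\ell_{y'})$ with $\ell_{y'}=y_2+\cdots+y_n$. Consequently the $x_iA$-component ($1\le i\le m-1$) of the image of a source vector $(v_0;v_1,\dots,v_{m-1};w)$, with $v_0\in[A]_d$ and $v_i\in[A]_{d-1}$, equals $v_0+M_{d-1}v_i$.

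The crux of the argument, and the place where the hypothesis $m\ge 3$ enters, is the following necessary condition: if $\cdot\ell\colon[A(L_{m,n})]_d\to[A(L_{m,n})]_{d+1}$ is surjective and $m\ge 3$, then $M_{d-1}\colon[A]_{d-1}\to[A]_d$ is surjective. Indeed, because $m-1\ge 2$ there are at least two free copies, say those of $x_1$ and $x_2$. Given any $u\in[A]_d\setminus\operatorname{im}(M_{d-1})$, I would show that the element whose $x_1A$-component is $u$ and whose remaining components vanish is not in the image of $\cdot\ell$: any preimage $(v_0;v_1,\dots,v_{m-1};w)$ must satisfy $v_0+M_{d-1}v_1=u$ and $v_0+M_{d-1}v_2=0$, and subtracting gives $M_{d-1}(v_1-v_2)=u$, contradicting $u\notin\operatorname{im}(M_{d-1})$. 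This is precisely the mechanism by which the repeated blocks $\mathcal M^{\,i-1}_i$ force surjectivity of $\cdot\ell$ in the reduction leading to Theorem~\ref{generalization of lollipop}, here supplied by the two free clique vertices rather than by repeated $x$-variables.

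I would then apply this with $d=\eta_{m,n}=\lambda_n$. Under $m\ge 3$, the hypothesis $\eta_{m,n}=\lambda_n$ already forces $\lambda_n\ge 1$: a direct computation gives $I(L_{m,1};t)=1+(m+1)t+(m-1)t^2$, whose mode is $1=\lambda_1+1$, so the case $n=1$ (the only $n$ with $\lambda_n=0$) is excluded and $M_{\lambda_n-1}$ is defined. Since the Hilbert series of $A=A(P_n)$ is $I(P_n;t)$, whose mode is $\lambda_n$, the definition of the mode (Definition~\ref{mode}) gives $\dim_\kk[A]_{\lambda_n-1}<\dim_\kk[A]_{\lambda_n}$; hence $M_{\lambda_n-1}$ maps a smaller space to a strictly larger one and cannot be surjective. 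Choosing $u\in[A]_{\lambda_n}\setminus\operatorname{im}(M_{\lambda_n-1})$ and feeding it into the previous step produces an element of $[A(L_{m,n})]_{\lambda_n+1}$ outside the image of $\cdot\ell$, so $\cdot\ell\colon[A(L_{m,n})]_{\eta_{m,n}}\to[A(L_{m,n})]_{\eta_{m,n}+1}$ is not surjective.

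The only delicate part is the bookkeeping of the first step: verifying that the clique relations $x_ix_j=0$ together with the single bridge relation $x_my_1=0$ produce exactly the stated direct-sum decomposition and block shape, in particular that multiplication by $x_i$ ($i<m$) is an isomorphism $[A]_d\xrightarrow{\sim}[x_iA]_{d+1}$. Once the block matrix is in place the argument is purely formal, and I expect no genuine obstacle beyond this: the whole point is that two free copies of $A$ reduce surjectivity to surjectivity of the single path map $M_{\lambda_n-1}$, which fails for dimension reasons at the mode. Note that only the free summands $x_1A$ and $x_2A$ are used, so the bridge summand $x_mA'$ never needs to be analyzed in detail.
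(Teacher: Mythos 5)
Your proof is correct, but it reaches the conclusion by a genuinely different route than the paper. The paper quotients out the bridge vertex: from the short exact sequence $0 \to R/(I:x_m)(-1) \xrightarrow{\cdot x_m} R/I \to R/(I+(x_m)) \to 0$ it obtains a degree-preserving surjection of $A(L_{m,n})$ onto the tensor product $\frac{\Bbbk[x_1,\ldots,x_{m-1}]}{(x_1,\ldots,x_{m-1})^2}\otimes_\Bbbk A(P_n)$ commuting with $\cdot\ell$; it then notes that $\cdot\ell:[A(P_n)]_{\lambda_n-1}\to[A(P_n)]_{\lambda_n}$ is not surjective because $\lambda_n$ is the mode, invokes Theorem~\ref{generalization of lollipop} to conclude that the tensor-product map from degree $\lambda_n$ to $\lambda_n+1$ fails surjectivity, and finally transports this failure back through the surjection. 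You never form the quotient: you keep the bridge summand $x_mA(P_{n-1})$ in an explicit vector-space decomposition of $A(L_{m,n})$ itself, and your two-copy subtraction trick re-proves, inline and at the level of elements, exactly the implication of Theorem~\ref{generalization of lollipop} that the paper cites as a black box (surjectivity of the big map forces surjectivity of $\cdot\ell:[A]_{i-1}\to[A]_i$); in both arguments this is precisely where $m\ge 3$ enters, as the paper needs at least two variables $x_1,\ldots,x_{m-1}$ in the tensor factor and you need two free clique vertices. What the paper's route buys is economy — Theorem~\ref{generalization of lollipop} is already established and the quotient discards the bridge summand, so no bookkeeping is needed. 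What your route buys is a self-contained proof independent of Section~3 and of the short-exact-sequence/diagram machinery, with the role of $m\ge 3$ made completely transparent; you also dispose of the degenerate case $\lambda_n=0$ (i.e.\ $n=1$) explicitly via $I(L_{m,1};t)=1+(m+1)t+(m-1)t^2$, a point the paper leaves implicit. Your structural claims check out: every independent set of $L_{m,n}$ meets the clique in at most one vertex, the bridge relation $x_my_1=0$ is exactly what truncates the last summand to $A(P_{n-1})$, and multiplication by $x_i$ ($i<m$) is injective on $A(P_n)$, so the block description of $\cdot\ell$ and the ensuing argument are sound.
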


\begin{proof}
	Write $A(L_{m,n}) = R/I$, where 
	\[
	R = \Bbbk[x_1, \ldots, x_m, y_1, \ldots, y_n]
	\quad\text{and}\quad
	I = (x_1^2, \ldots, x_m^2, y_1^2, \ldots, y_n^2) + I(L_{m,n}).
	\]
	Consider the short exact sequence
	\[
	0 \longrightarrow R/(I:x_m)(-1)
	\xrightarrow{\cdot x_m}
	R/I
	\longrightarrow R/(I + (x_m))
	\longrightarrow 0,
	\]
	in which
	\[
	R/(I + (x_m))
	\cong
	\frac{\Bbbk[x_1, \ldots, x_{m-1}]}{(x_1, \ldots, x_{m-1})^2}
	\otimes_\Bbbk
	A(P_n).
	\]
	This yields the following commutative diagram with exact rows
	\[
	\xymatrix{
		[A(L_{m,n})]_{\lambda_n} \ar[r] \ar[d]^{\cdot \ell} &
		\left[
		\frac{\Bbbk[x_1, \ldots, x_{m-1}]}{(x_1, \ldots, x_{m-1})^2}
		\otimes_\Bbbk A(P_n)
		\right]_{\lambda_n} \ar[r] \ar[d]^{\cdot \ell} &
		0 \\
		[A(L_{m,n})]_{\lambda_n + 1} \ar[r] &
		\left[
		\frac{\Bbbk[x_1, \ldots, x_{m-1}]}{(x_1, \ldots, x_{m-1})^2}
		\otimes_\Bbbk A(P_n)
		\right]_{\lambda_n + 1} \ar[r] &
		0.}
	\]
Since $\lambda_n$ is the mode of $I(P_n; t)$, the multiplication map 
	\[
	\cdot \ell : [A(P_n)]_{\lambda_n - 1} \longrightarrow [A(P_n)]_{\lambda_n}
	\]
	is not surjective.  
	By Theorem~\ref{generalization of lollipop}, the map
	\[
	\cdot \ell :
	\left[
	\frac{\Bbbk[x_1, \ldots, x_{m-1}]}{(x_1, \ldots, x_{m-1})^2}
	\otimes_\Bbbk A(P_n)
	\right]_{\lambda_n}
	\longrightarrow
	\left[
	\frac{\Bbbk[x_1, \ldots, x_{m-1}]}{(x_1, \ldots, x_{m-1})^2}
	\otimes_\Bbbk A(P_n)
	\right]_{\lambda_n + 1}
	\]
	is also not surjective.  
	Hence, the map 
	\[
	\cdot \ell : [A(L_{m,n})]_{\lambda_n} \longrightarrow [A(L_{m,n})]_{\lambda_n + 1}
	\]
	fails surjectivity, as desired.
\end{proof}

\begin{ex}
Consider the lollipop graph $L_{4,9}$.  
By using {\tt Macaulay2} {\rm \cite{Macaulay2}} to compute the Hilbert series
	\[
	HS(A(L_{4,9}),t)
	= 1 + 13t + 63t^2 + 140t^3 + 140t^4 + 51t^5 + 3t^6.
	\]
It follows that $\eta_{4,9} = \lambda_9 = 3$.  
	By Lemma~\ref{eta_m,n = lambda_n}, the multiplication map $\cdot \ell$ from degree~$3$ to degree~$4$ is not surjective.  
	Hence, $A(L_{4,9})$ fails the WLP.
\end{ex}

\begin{prop}\label{8,11,14,15}
	If $n \in \{8, 11, 14, 15\}$ or $n \ge 17$, and $m \ge 3$, then the algebra $A(L_{m,n})$ fails surjectivity from degree $\lambda_n + 1$ to degree $\lambda_n + 2$.  
	Consequently, $A(L_{m,n})$ does not have the WLP.
\end{prop}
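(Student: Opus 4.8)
Following the strategy of the proof of Lemma~\ref{eta_m,n = lambda_n} but shifted up by one degree, I would write $A(L_{m,n})=R/I$ with $R=\kk[x_1,\ldots,x_m,y_1,\ldots,y_n]$ and peel off the clique vertex $x_m$ through the short exact sequence
\[
0 \longrightarrow R/(I:x_m)(-1)\xrightarrow{\ \cdot x_m\ } R/I \longrightarrow R/(I+(x_m))\longrightarrow 0 .
\]
Killing $x_m$ deletes the bridge, so $R/(I+(x_m))\cong B':=\frac{\kk[x_1,\ldots,x_{m-1}]}{(x_1,\ldots,x_{m-1})^2}\otimes_\kk A(P_n)$, the algebra of the disjoint union $K_{m-1}\sqcup P_n$. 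This sequence induces a commutative diagram with exact rows relating $\cdot\ell$ in degrees $\lambda_n+1$ and $\lambda_n+2$ on $A(L_{m,n})$ and on the quotient $B'$; since $A(L_{m,n})\twoheadrightarrow B'$ is surjective, a standard diagram chase shows that surjectivity of $\cdot\ell$ descends to $B'$. Hence it is enough to prove that $\cdot\ell:[B']_{\lambda_n+1}\to[B']_{\lambda_n+2}$ fails to be surjective.

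For this I would apply Theorem~\ref{generalization of lollipop} with $A=A(P_n)$ and $i=\lambda_n+1$. The key point is that $m\ge 3$ forces the complete quadratic factor to involve the $m-1\ge 2$ variables $x_1,\ldots,x_{m-1}$, so the theorem genuinely requires \emph{both} $\cdot\ell:[A(P_n)]_{\lambda_n}\to[A(P_n)]_{\lambda_n+1}$ and $\cdot\ell^2:[A(P_n)]_{\lambda_n}\to[A(P_n)]_{\lambda_n+2}$ to be surjective in order for $\cdot\ell$ on $B'$ to be surjective. By Theorem~\ref{WLP for P_n}(i) (when $n\ge 17$) and (iii) (when $n\in\{8,11,14,15\}$), the first of these maps already fails to be surjective. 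Consequently $\cdot\ell:[B']_{\lambda_n+1}\to[B']_{\lambda_n+2}$ is not surjective, and by the previous paragraph neither is $\cdot\ell:[A(L_{m,n})]_{\lambda_n+1}\to[A(L_{m,n})]_{\lambda_n+2}$.

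Two pieces of bookkeeping remain. First, to invoke Theorem~\ref{generalization of lollipop} at $i=\lambda_n+1$ one must check that this index lies in the interior range $\{1,\ldots,\alpha(P_n)-1\}$, i.e. $\lambda_n+1\le\alpha(P_n)-1=\lceil n/2\rceil-1$; this is a routine verification, holding with equality at $n=8$ and with room to spare for the other listed values and for all $n\ge 17$ (the mode $\lambda_n$ grows roughly linearly in $n$ with slope well below $1/2$, while $\alpha(P_n)=\lceil n/2\rceil$). Second, to pass from the failure of surjectivity to the failure of the WLP, I would use Proposition~\ref{mode of lollipop}: since $\eta_{m,n}\le\lambda_n+1$, the (unimodal) Hilbert function of $A(L_{m,n})$ is non-increasing from degree $\lambda_n+1$ on, so $\dim_\kk[A(L_{m,n})]_{\lambda_n+1}\ge\dim_\kk[A(L_{m,n})]_{\lambda_n+2}$. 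Maximal rank in these degrees therefore means surjectivity, which we have just ruled out, and by Proposition~\ref{Proposition2.5} this shows $A(L_{m,n})$ does not have the WLP.

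The conceptual crux—and the reason the hypothesis cannot be weakened to $m\ge 2$—is precisely the ``both maps'' clause of Theorem~\ref{generalization of lollipop}. When the quadratic factor has a single variable (the case $m=2$), surjectivity of $\cdot\ell$ on $B'$ reduces to surjectivity of the second-power map $\cdot\ell^2:[A(P_n)]_{\lambda_n}\to[A(P_n)]_{\lambda_n+2}$ alone, which can persist even though the first-power map fails; this is exactly why $A(L_{2,8})$ and $A(L_{2,11})$ retain the WLP. Thus the only genuinely delicate point is recognizing that, for $m\ge 3$, the failure of the single first-power map on $A(P_n)$ is transmitted through Theorem~\ref{generalization of lollipop}; the remaining ingredients (the short exact sequence, the diagram chase, and the degree count) are routine.
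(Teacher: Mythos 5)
Your proof is correct and follows essentially the same route as the paper's: the same short exact sequence obtained by quotienting by the clique vertex $x_m$, the same identification of $R/(I+(x_m))$ with $\frac{\Bbbk[x_1,\ldots,x_{m-1}]}{(x_1,\ldots,x_{m-1})^2}\otimes_\Bbbk A(P_n)$, and the same combination of Theorem~\ref{generalization of lollipop} with Theorem~\ref{WLP for P_n} to push the failure of surjectivity of $\cdot\ell:[A(P_n)]_{\lambda_n}\to[A(P_n)]_{\lambda_n+1}$ up to $A(L_{m,n})$ in degrees $\lambda_n+1\to\lambda_n+2$. The only difference is that you spell out two checks the paper leaves implicit --- the index-range condition $\lambda_n+1\le \alpha(P_n)-1$ needed to invoke Theorem~\ref{generalization of lollipop}, and the deduction of WLP failure from surjectivity failure via $\eta_{m,n}\le\lambda_n+1$ --- both of which are accurate.
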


\begin{proof}
	As in the proof of Lemma~\ref{eta_m,n = lambda_n}, we have the following diagram with exact rows
	\[
	\xymatrix{
		[A(L_{m,n})]_{\lambda_n + 1} \ar[r] \ar[d]^{\cdot \ell} &
		\left[
		\frac{\Bbbk[x_1, \ldots, x_{m-1}]}{(x_1, \ldots, x_{m-1})^2}
		\otimes_\Bbbk A(P_n)
		\right]_{\lambda_n + 1} \ar[r] \ar[d]^{\cdot \ell} &
		0 \\
		[A(L_{m,n})]_{\lambda_n + 2} \ar[r] &
		\left[
		\frac{\Bbbk[x_1, \ldots, x_{m-1}]}{(x_1, \ldots, x_{m-1})^2}
		\otimes_\Bbbk A(P_n)
		\right]_{\lambda_n + 2} \ar[r] &
		0.}
	\]
	By \Cref{WLP for P_n}, the multiplication map
	\[
	\cdot \ell : [A(P_n)]_{\lambda_n} \longrightarrow [A(P_n)]_{\lambda_n+1}
	\]
	is not surjective.  
	Applying \Cref{generalization of lollipop}, we deduce that
	\[
	\cdot \ell :
	\left[
	\frac{\Bbbk[x_1, \ldots, x_{m-1}]}{(x_1, \ldots, x_{m-1})^2}
	\otimes_\Bbbk A(P_n)
	\right]_{\lambda_n+1}
	\longrightarrow
	\left[
	\frac{\Bbbk[x_1, \ldots, x_{m-1}]}{(x_1, \ldots, x_{m-1})^2}
	\otimes_\Bbbk A(P_n)
	\right]_{\lambda_n+2}
	\]
	is also not surjective.  
	Thus, the map
	\[
	\cdot \ell : [A(L_{m,n})]_{\lambda_n+1} \longrightarrow [A(L_{m,n})]_{\lambda_n+2}
	\]
	fails to be surjective.  
By \Cref{mode of lollipop}, $\eta_{m,n} \le \lambda_n + 1$, the assertion follows.
\end{proof}
\begin{prop}\label{2,5,6,9,...}
	If $n \in \{2, 5, 6, 9, 10, 12, 13, 16\}$ and $m \ge 3$, then the algebra $A(L_{m,n})$ does not have the WLP.
\end{prop}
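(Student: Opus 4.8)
My plan is to analyze the critical multiplication map through a short exact sequence coming from deleting a clique vertex, splitting according to whether the mode $\eta_{m,n}$ (which lies in $\{\lambda_n,\lambda_n+1\}$ by \Cref{mode of lollipop}) equals $\lambda_n$ or $\lambda_n+1$. Labelling the clique so that $x_m$ is the bridge vertex, deletion of $x_m$ gives the sequence already used in \Cref{eta_m,n = lambda_n},
\[
0\longrightarrow A(P_{n-1})(-1)\xrightarrow{\ \cdot x_m\ }A(L_{m,n})\longrightarrow \frac{\Bbbk[x_1,\dots,x_{m-1}]}{(x_1,\dots,x_{m-1})^2}\otimes_\Bbbk A(P_n)\longrightarrow 0,
\]
since $R/(I:x_m)\cong A(P_{n-1})$ (deleting $N[x_m]=\{x_1,\dots,x_m,y_1\}$ leaves the path on $y_2,\dots,y_n$) and $R/(I+(x_m))\cong A(K_{m-1})\otimes_\Bbbk A(P_n)$. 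Which of the two mode values occurs is, for each fixed $n$, governed by a threshold in $m$ that one reads off from $I(L_{m,n};t)=I(P_{n+1};t)+(m-1)\,t\,I(P_n;t)$ (see the proof of \Cref{mode of lollipop}); both possibilities arise, so I must treat both.

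If $\eta_{m,n}=\lambda_n$, then \Cref{eta_m,n = lambda_n} applies directly and $\cdot\ell$ fails surjectivity from degree $\lambda_n$ to $\lambda_n+1$. The real content is the case $\eta_{m,n}=\lambda_n+1$: there the Hilbert function strictly increases from $\lambda_n$ to $\lambda_n+1$, so the WLP would force $\cdot\ell$ to be \emph{injective} there, and I aim to show injectivity fails. Feeding the displayed sequence into the snake lemma for $\cdot\ell\colon[\,\cdot\,]_{\lambda_n}\to[\,\cdot\,]_{\lambda_n+1}$ produces an exact sequence
\[
0\to\ker_L\to\ker_M\to\ker_Q\xrightarrow{\ \delta\ }\operatorname{coker}_L,
\]
where $L,M,Q$ denote the three algebras above. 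Writing $d:=s_{\lambda_n}(P_{n-1})-s_{\lambda_n-1}(P_{n-1})=\dim\operatorname{coker}_L-\dim\ker_L$, one gets $\dim\ker_M\ge\dim\ker_Q-d$, so the nonvanishing of $\ker_M$ follows as soon as $\dim_\Bbbk\ker_Q>d$. By \Cref{generalization of lollipop}, reading the rank off matrix~\eqref{3.7}, the kernel of $\cdot\ell$ on $Q=A(K_{m-1})\otimes A(P_n)$ from degree $\lambda_n$ has dimension $(m-1)k_1+(r_1-r_2)$, where $k_1,r_1$ are the nullity and rank of $\cdot\ell\colon[A(P_n)]_{\lambda_n-1}\to[A(P_n)]_{\lambda_n}$ and $r_2$ is the rank of $\cdot\ell^2\colon[A(P_n)]_{\lambda_n-1}\to[A(P_n)]_{\lambda_n+1}$.

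It then remains to check $\dim\ker_Q>d$ for each $n$ in the range where $\eta_{m,n}=\lambda_n+1$. For $n\in\{5,9,12,16\}$ one has $d<0$, i.e. $s_{\lambda_n-1}(P_{n-1})>s_{\lambda_n}(P_{n-1})$, so already $\ker_L\neq0$ and hence $\ker_M\neq0$ with no further work. For $n\in\{2,6,13\}$ the algebra $A(P_n)$ has the WLP and $\lambda_n-1$ lies below its mode, so $k_1=0$, $r_1=s_{\lambda_n-1}(P_n)$ and $r_2\le s_{\lambda_n+1}(P_n)$; the resulting bound $\dim\ker_Q\ge s_{\lambda_n-1}(P_n)-s_{\lambda_n+1}(P_n)$ exceeds the (nonnegative) value of $d$ in each of these three cases, so again $\ker_M\neq0$.

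The one genuinely delicate value is $n=10$. Here $\eta_{m,n}=\lambda_{10}+1=4$ for every $m\ge3$ (so \Cref{eta_m,n = lambda_n} never applies), $k_1=0$, and $d=s_3(P_9)-s_2(P_9)=7$, whereas the crude estimate only yields $\dim\ker_Q\ge s_2(P_{10})-s_4(P_{10})=1$; indeed a generic intersection count even suggests $\dim\ker_Q=1$, far short of the required $>7$. The inequality $\dim\ker_Q>7$ is equivalent to the strong-Lefschetz-type statement that $\cdot\ell^2\colon[A(P_{10})]_2\to[A(P_{10})]_4$ has rank at most $28$, equivalently that $V\cap K$ has dimension at least $8$, where $V=\operatorname{im}\!\big(\cdot\ell\colon[A(P_{10})]_2\to[A(P_{10})]_3\big)$ and $K=\ker\!\big(\cdot\ell\colon[A(P_{10})]_3\to[A(P_{10})]_4\big)$. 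This does not follow from the WLP of $A(P_{10})$ and is where I expect the main difficulty to lie; I would settle it either by a direct computation of this rank (for instance with {\tt Macaulay2}) or, if $\ker_Q$ turns out not to be large enough, by analyzing the connecting map $\delta$ directly to show it cannot be injective. Everything else is routine bookkeeping with the snake lemma and \Cref{generalization of lollipop}.
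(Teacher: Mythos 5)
Your overall architecture is sound and matches the paper's in one respect: you split according to whether $\eta_{m,n}=\lambda_n$ (where Lemma~\ref{eta_m,n = lambda_n} applies) or $\eta_{m,n}=\lambda_n+1$ (where one must kill injectivity from degree $\lambda_n$ to $\lambda_n+1$). Your exact sequence, the identifications $R/(I:x_m)\cong A(P_{n-1})$ and $R/(I+(x_m))\cong \tfrac{\Bbbk[x_1,\dots,x_{m-1}]}{(x_1,\dots,x_{m-1})^2}\otimes_\Bbbk A(P_n)$, and your nullity formula $(m-1)k_1+(r_1-r_2)$ read off from matrix~\eqref{3.7} are all correct, and your numerics do dispose of $n\in\{5,9,12,16\}$ (where $d<0$) and $n\in\{2,6,13\}$ (where the crude bound beats $d$). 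But the proposal is genuinely incomplete at $n=10$, as you yourself flag: there $\eta_{m,10}=4$ for every $m\ge3$, $d=s_3(P_9)-s_2(P_9)=35-28=7$, while your estimate only yields $\dim\ker_Q\ge s_2(P_{10})-s_4(P_{10})=36-35=1$. Closing this case requires $\operatorname{rank}\bigl(\cdot\ell^2:[A(P_{10})]_2\to[A(P_{10})]_4\bigr)\le 28$, a rank deficiency of at least $7$ below the generic value $35$; this does not follow from the WLP of $A(P_{10})$ or from anything cited, and you defer it to a Macaulay2 computation (or a connecting-map analysis) that is not carried out. One of the eight cases of the proposition is therefore unproved.

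The difficulty is created by your choice of sequence: deleting the bridge vertex puts $A(L_{m,n})$ in the \emph{middle}, so a kernel there must be produced against the connecting homomorphism, forcing the quantitative condition $\dim\ker_Q>d$. The paper instead multiplies by the bridge \emph{edge} $x_my_1$ on the disjoint-union algebra, giving
\[
0\longrightarrow A(P_{n-2})(-2)\xrightarrow{\ \cdot\, x_m y_1\ } \frac{\Bbbk[x_1,\dots,x_m]}{(x_1,\dots,x_m)^2}\otimes_\Bbbk A(P_n)\longrightarrow A(L_{m,n})\longrightarrow 0,
\]
which places the lollipop as the \emph{quotient}. Then non-injectivity need only be produced on the middle term, where Theorem~\ref{generalization of lollipop} reduces it to non-injectivity of $\cdot\ell^2:[A(P_n)]_{\lambda_n-1}\to[A(P_n)]_{\lambda_n+1}$, and that is immediate from dimensions alone, since $s_{\lambda_n-1}(P_n)>s_{\lambda_n+1}(P_n)$ for all eight values of $n$ — in particular $36>35$ for $n=10$. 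Because the left-hand vertical map $[A(P_{n-2})]_{\lambda_n-2}\to[A(P_{n-2})]_{\lambda_n-1}$ is injective (Theorem~\ref{WLP for P_n} and Table~\ref{tab_indpoly}), the kernel of the middle map injects into the kernel of the lollipop map, and the WLP fails with no connecting-map bookkeeping at all. Rerouting your $n=10$ case (indeed, all eight cases) through this sequence closes the proof using nothing beyond Hilbert function comparisons.
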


\begin{proof}
	If $\eta_{m,n} = \lambda_n$, then the result follows from Lemma~\ref{eta_m,n = lambda_n}.  
	Hence, we may assume that $\eta_{m,n} = \lambda_n + 1$.  
Set
	\[
R/J =\frac{\Bbbk[x_1, \ldots, x_m]}{(x_1, \ldots, x_m)^2}\otimes_\Bbbk A(P_n),
	\quad
	\text{where} \quad
	A(P_n) =
	\frac{\Bbbk[y_1, \ldots, y_n]}{(y_1^2, \ldots, y_n^2) + I(P_n)}.
	\]
	Consider the short exact sequence
	\[
	0 \longrightarrow R/(J:x_m y_1)(-2)
	\xrightarrow{\cdot x_m y_1}
	R/J
	\longrightarrow R/(J + (x_m y_1))
	\longrightarrow 0,
	\]
	where
	$R/(J:x_m y_1) \cong A(P_{n-2})$
	and
	$R/(J + (x_m y_1)) \cong A(L_{m,n})$.
	This induces the following commutative diagram with exact rows
	\[
	\xymatrix@C=3em@R=3em{
		0 \ar[r] &
		[A(P_{n-2})]_{\lambda_n - 2} \ar[r] \ar[d]^{\cdot \ell} &
		\left[
		\frac{\Bbbk[x_1, \ldots, x_m]}{(x_1, \ldots, x_m)^2}
		\otimes_\Bbbk A(P_n)
		\right]_{\lambda_n} \ar[r] \ar[d]^{\cdot \ell} &
		[A(L_{m,n})]_{\lambda_n} \ar[r] \ar[d]^{\cdot \ell} &
		0 \\
		0 \ar[r] &
		[A(P_{n-2})]_{\lambda_n - 1} \ar[r] &
		\left[
		\frac{\Bbbk[x_1, \ldots, x_m]}{(x_1, \ldots, x_m)^2}
		\otimes_\Bbbk A(P_n)
		\right]_{\lambda_n + 1} \ar[r] &
		[A(L_{m,n})]_{\lambda_n + 1} \ar[r] &
		0
	}
	\]
By Theorem~\ref{WLP for P_n} and Table~\ref{tab_indpoly}, the multiplication map
\[
\cdot \ell : [A(P_{n-2})]_{\lambda_n - 2} \to [A(P_{n-2})]_{\lambda_n - 1}
\]
is injective, for all $n \in \{2,5,6,9,10,12,13,16\}$.  
By using {\tt Macaulay2} \cite{M2codes} to compute the Hilbert series of  $A(P_n)$, we see that
	$\dim_\Bbbk [A(P_n)]_{\lambda_n - 1}
	\ge \dim_\Bbbk [A(P_n)]_{\lambda_n + 1}$,
	so the map
	$\cdot \ell^2 : [A(P_n)]_{\lambda_n - 1}
	\to [A(P_n)]_{\lambda_n + 1}$
	is not injective.  
	By Theorem~\ref{generalization of lollipop}, the map
	\[
	\cdot \ell :
	\left[
	\frac{\Bbbk[x_1, \ldots, x_m]}{(x_1, \ldots, x_m)^2}
	\otimes_\Bbbk A(P_n)
	\right]_{\lambda_n}
	\longrightarrow
	\left[
	\frac{\Bbbk[x_1, \ldots, x_m]}{(x_1, \ldots, x_m)^2}
	\otimes_\Bbbk A(P_n)
	\right]_{\lambda_n + 1}
	\]
	is also not injective.  
Hence, the right vertical map of the diagram 
\[
\cdot \ell : [A(L_{m,n})]_{\lambda_n}\to [A(L_{m,n})]_{\lambda_n + 1}
\]
fails injectivity.  Since $\eta_{m,n} = \lambda_n + 1$, so $A(L_{m,n})$ does not have the WLP. This concludes the proof.
\end{proof}

We now turn to the remaining cases $n \in \{1,3,4,7\}$.  Unlike the previous cases, $A(L_{m,n})$ has the WLP for all $m \ge 3$.  
To verify this, we first compute the Hilbert series of $A(L_{3,n})$ for $n \in \{1,3,4,7\}$ by using {\tt Macaulay2}~\cite{Macaulay2}
\[
\begin{aligned}
	&HS(A(L_{3,1}),t) = 1 + 4t + 2t^2,\\
	&HS(A(L_{3,3}),t) = 1 + 6t + 9t^2 + 2t^3,\\
	&HS(A(L_{3,4}),t) = 1 + 7t + 14t^2 + 7t^3,\\
	&HS(A(L_{3,7}),t) = 1 + 10t + 35t^2 + 50t^3 + 25t^4 + 2t^5.
\end{aligned}
\]
By Table~\ref{tab_indpoly}, $\lambda_1 = 0$, $\lambda_3 = \lambda_4 = 1$, and $\lambda_7 = 2$. Therefore,  by Remark~\ref{constant mode}, the mode of $I(L_{m,n};t)$ is $\eta_{m,n}=\lambda_n + 1$ for all $m \ge 3$ and $n \in \{1,3,4,7\}$.  
We are thus ready to establish the following proposition.

\begin{prop}\label{1,3,4,7}
	For all $m \ge 3$ and $n \in \{1,3,4,7\}$, the algebra $A(L_{m,n})$ has the WLP.
\end{prop}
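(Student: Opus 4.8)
The plan is to prove the WLP for $A(L_{m,n})$ with $n\in\{1,3,4,7\}$ and all $m\ge 3$ by induction on $m$, using the same short exact sequence machinery that appeared in the previous propositions together with the structural result \Cref{generalization of lollipop}. Since we have already established that the mode is $\eta_{m,n}=\lambda_n+1$ for all such $m$ and $n$, and since $\lambda_n$ takes the small values $0,1,1,2$ respectively, the algebras involved have very short Hilbert series; this is what makes a direct inductive verification tractable. The base case $m=3$ is settled by the explicit Hilbert series computations quoted above: for each $n\in\{1,3,4,7\}$ the displayed $HS(A(L_{3,n}),t)$ lets me read off the dimensions degree by degree, and I would verify that the multiplication by $\ell$ has maximal rank in every degree. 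Concretely, below the mode the map should be injective and above it surjective, so it suffices to check the single critical transition around degree $\eta_{3,n}=\lambda_n+1$, either by a rank computation in \texttt{Macaulay2} or by the dimension count forcing maximal rank whenever the Hilbert function is strictly unimodal through that degree.

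For the inductive step I would pass from $A(L_{m,n})$ to $A(L_{m+1,n})$ exactly as in \Cref{eta_m,n = lambda_n}: strip off one vertex $x_{m+1}$ of the clique that is not adjacent to the path. This gives the short exact sequence
\[
0 \longrightarrow R/(I:x_{m+1})(-1) \xrightarrow{\;\cdot x_{m+1}\;} R/I \longrightarrow R/(I+(x_{m+1})) \longrightarrow 0,
\]
where $R/(I+(x_{m+1}))\cong A(L_{m,n})$ and, because $x_{m+1}$ is adjacent to every other clique vertex and to the bridge vertex, the colon ideal collapses $R/(I:x_{m+1})$ to the algebra $A(P_n)$ (the clique contribution dies, leaving only the path). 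Writing the induced commutative ladder with the vertical maps given by $\cdot\ell$, I obtain the snake-lemma long exact sequence relating the kernels and cokernels of multiplication on $A(P_n)$, on $A(L_{m,n})$, and on $A(L_{m+1,n})$. The outer two columns are controlled: multiplication on $A(P_n)$ has known behavior from \Cref{WLP for P_n}, and multiplication on $A(L_{m,n})$ has the WLP by the inductive hypothesis.

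The main obstacle is that the snake lemma only yields maximal rank for the middle map if the connecting homomorphism vanishes, or equivalently if the ranks add up correctly; simply knowing both outer columns have maximal rank does not automatically transfer to the middle one. So the crux is a careful dimension bookkeeping in the one critical degree $\lambda_n$ to $\lambda_n+1$. I would argue that because $\eta_{m+1,n}=\lambda_n+1$ (already established via \Cref{constant mode}), the only multiplication map in danger of failing maximal rank for $A(L_{m+1,n})$ is the transition from degree $\lambda_n$ to degree $\lambda_n+1$; in all other degrees the map is injective (below the mode) or surjective (at and above the mode) for dimension reasons, inherited through the exact rows. For that single critical degree I would compare $\dim_\Bbbk[A(L_{m+1,n})]_{\lambda_n}$ with $\dim_\Bbbk[A(L_{m+1,n})]_{\lambda_n+1}$ using the additivity of Hilbert functions across the short exact sequence, and show the connecting map cannot obstruct surjectivity. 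The small explicit values of $\lambda_n$ for $n\in\{1,3,4,7\}$ keep this final rank comparison entirely elementary, so I expect the argument to close cleanly once the snake diagram and the degree count are set up.
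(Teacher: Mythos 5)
Your overall strategy---induction on $m$, with the short exact sequence obtained by stripping a non-bridge clique vertex so that the colon algebra is $A(P_n)(-1)$ and the quotient is $A(L_{m,n})$---is exactly the paper's, and your identification of both algebras is correct. The genuine gap is that you never close the inductive step: you single out the transition $[A(L_{m+1,n})]_{\lambda_n}\to[A(L_{m+1,n})]_{\lambda_n+1}$ as a crux requiring an analysis of the connecting homomorphism plus ``dimension bookkeeping,'' and then only state intentions for it. In fact no such analysis is needed, and the plan you sketch aims at the wrong property: since $\eta_{m+1,n}=\lambda_n+1$ (by \Cref{constant mode}), we have $h_{\lambda_n}<h_{\lambda_n+1}$ for $A(L_{m+1,n})$, so maximal rank at $i=\lambda_n$ means \emph{injectivity}; surjectivity there is impossible, and no comparison of dimensions across the sequence can certify a rank. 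What you are missing is the mode alignment that makes the plain diagram chase work in \emph{every} degree, including this one: for $i\le\lambda_n$ the left vertical map $[A(P_n)]_{i-1}\to[A(P_n)]_i$ is injective (it sits below the mode $\lambda_n$ of the path, and $A(P_n)$ has the WLP for $n\in\{1,3,4,7\}$ by \Cref{WLP for P_n}), and the right vertical map $[A(L_{m,n})]_i\to[A(L_{m,n})]_{i+1}$ is injective (by the inductive hypothesis, since $i\le\eta_{m,n}-1=\lambda_n$); left-exactness of the kernel sequence then forces the middle map to be injective, with no connecting map in sight. Dually, for $i\ge\lambda_n+1$ both outer maps are surjective, and the cokernel half of the snake lemma gives surjectivity of the middle map regardless of the connecting homomorphism. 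The connecting map could only interfere if the two outer columns were of mixed type, which the alignment of the modes ($\lambda_n$ for the path, shifted by one degree, versus $\lambda_n+1$ for $L_{m,n}$) rules out. Injectivity for $i\le\lambda_n$ and surjectivity for $i\ge\lambda_n+1$ is maximal rank in every degree, so $A(L_{m+1,n})$ has the WLP and the induction closes.

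Two smaller points. First, for the base case $m=3$ a \texttt{Macaulay2} rank verification is acceptable (the paper instead observes that $L_{3,n}$ is the tadpole graph $T_{3,n}$ and quotes \cite[Corollary~4.5]{NT2024}), but your fallback argument---that maximal rank is forced whenever the Hilbert function is strictly unimodal through the critical degree---is false: unimodality is a necessary condition for the WLP (\Cref{Proposition2.8}), never a sufficient one, and no dimension count alone certifies the rank of a multiplication map. Second, your claim that in all non-critical degrees the middle map has maximal rank ``for dimension reasons, inherited through the exact rows'' is not a proof either; the inheritance mechanism is precisely the two-sided diagram chase described above, which, once stated, covers your ``critical'' degree as well and makes the case distinction unnecessary.
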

\begin{proof}
When $m = 3$, the lollipop graph $L_{3,n}$ coincides with the tadpole graph $T_{3,n}$.  Hence, by {\rm \cite[Corollary~4.5]{NT2024}}, the algebra $A(L_{3,n})$ has the WLP for all $n \in \{1,3,4,7\}$.
	
Fix $n \in \{1,3,4,7\}$. We prove the assertion by induction on $m$.  The base case $m = 3$ has been established above.  
Assume that the statement holds for some $m \ge 3$, and we prove it for $m + 1$.
	
Write $A(L_{m+1,n}) =R/I$ and consider the short exact sequence
	\[
	0 \longrightarrow R/(I:x_1)(-1)
	\xrightarrow{\cdot x_1}
	R/I
	\longrightarrow R/(I + (x_1))
	\longrightarrow 0,
	\]
	where $R/(I:x_1) \cong A(P_n)$ and $R/(I + (x_1)) \cong A(L_{m,n})$.  
	This yields the commutative diagram with exact rows
	\[
	\xymatrix{
		0 \ar[r] &
		[A(P_n)]_{i-1} \ar[r] \ar[d]^{\cdot \ell} &
		[A(L_{m+1,n})]_i \ar[r] \ar[d]^{\cdot \ell} &
		[A(L_{m,n})]_i \ar[r] \ar[d]^{\cdot \ell} & 0 \\
		0 \ar[r] &
		[A(P_n)]_i \ar[r] &
		[A(L_{m+1,n})]_{i+1} \ar[r] &
		[A(L_{m,n})]_{i+1} \ar[r] & 0.
	}
	\]
For all $i \ge \eta_{m,n}=\lambda_n + 1$, the multiplication map
	$\cdot \ell : [A(P_n)]_{i-1} \to [A(P_n)]_i$
	is surjective since $A(P_n)$ has the WLP by Theorem~\ref{WLP for P_n}.  
	By the induction hypothesis,
	$\cdot \ell : [A(L_{m,n})]_i \to [A(L_{m,n})]_{i+1}$
	is also surjective.  
	Hence, applying the snake lemma, we deduce that
	$\cdot \ell : [A(L_{m+1,n})]_i \to [A(L_{m+1,n})]_{i+1}$
	is surjective.  
	A similar argument shows that $\cdot \ell : [A(L_{m+1,n})]_i \to [A(L_{m+1,n})]_{i+1}$ is injective for all $i \le \eta_{m,n}-1=\lambda_n$.  
Therefore, $A(L_{m+1,n})$ has the WLP.  
Hence, by induction, the claim holds for all $m \ge 3$.
\end{proof}

We are now ready to prove \Cref{WLP for lollipop}.

\begin{proof}
	When $m = 1$ (respectively, $m = 2$), the lollipop graph $L_{m,n}$ is the path graph $P_{n+1}$ (respectively, $P_{n+2}$).  
	Hence, by Theorem~\ref{WLP for P_n}, the theorem holds in these two cases.  
	
	It remains to consider the case $m \ge 3$.  
	We divide the proof into three cases according to the value of $n$.
	
	\noindent\textbf{Case 1:} $n \in \{8,11,14,15\}$ or $n \ge 17$.  
	In this case, $A(L_{m,n})$ fails to have the WLP by Proposition~\ref{8,11,14,15}.
	
	\noindent\textbf{Case 2:} $n \in \{2,5,6,9,10,12,13,16\}$.  
	In this case, $A(L_{m,n})$ also fails to have the WLP by Proposition~\ref{2,5,6,9,...}. 
	
	\noindent\textbf{Case 3:} $n \in \{1,3,4,7\}$.  
	Here, $A(L_{m,n})$ has the WLP by Proposition~\ref{1,3,4,7}.
	
Therefore, the proof is complete.
\end{proof}

\section*{Acknowledgments}
This research was supported by Hue University of Education's Project under grant number NCTBSV.T.25--TN--101--01.   
The first author also acknowledges the partial support of Hue University under the Core Research Program, Grant No.~NCTB.DHH.2024.01.


\bibliographystyle{plain} 
\bibliography{WLP_Graphs} 

\end{document}